\documentclass[11pt]{amsart}
\usepackage[margin=2.5cm]{geometry}
\RequirePackage{amsmath,amssymb,amsthm, amscd, comment,mathtools}
\usepackage{tikz-cd}
\usepackage{extarrows}
\usepackage[pagebackref,breaklinks,colorlinks,linkcolor=red,anchorcolor=red,citecolor=blue]{hyperref}
\usepackage{enumitem}

\newtheorem{theorem}[subsection]{Theorem}
\newtheorem{proposition}[subsection]{Proposition}
\newtheorem{corollary}[subsection]{Corollary}
\newtheorem{lemma}[subsection]{Lemma}
\newtheorem{conjecture}[subsection]{Conjecture}

\theoremstyle{definition}

\newtheorem{definition}[subsection]{Definition}
\newtheorem{assumption}[subsection]{Assumption}
\newtheorem{remark}[subsection]{Remark}

\newtheorem{example}[subsection]{Example}
\numberwithin{equation}{subsection}

\setcounter{tocdepth}{1}
\makeatletter

\begin{document}
\title{Additivity of non-acyclicity classes for constructible \'{e}tale sheaves}
\author[Jiangnan Xiong]{Jiangnan Xiong${}^{\dag}$}
\thanks{${}^{\dag}$2401110029@stu.pku.edu.cn}
\thanks{${}^\dag$School of Mathematical Sciences, Peking University, No.5 Yiheyuan Road Haidian District.,
	Beijing, 100871, P.R. China.}
\thanks{\today}
\begin{abstract}
	Using a bivariant version of cohomological correspondences, we establish a categorical trace-like formula for the non-acyclicity classes introduced by Yang and Zhao (Invent. Math. 240: 123–191, 2025). As an application, we prove the additivity for the non-acyclicity classes.
\end{abstract}
\maketitle
\tableofcontents
\section{Introduction}
\subsection{}
Cohomological characteristic class of a constructible \'{e}tale sheaf is an important invariant in geometric ramification theory. Let $h:X\to\mathrm{Spec}k$ be a separated scheme of finite type over a perfect field $k$, $\Lambda$ a finite local ring whose residue field has characteristic invertible in $k$, and $\mathcal{F}$ a constructible complex of $\Lambda$-modules of finite Tor-amplitude on $X$. Abbes and Saito introduced the cohomological characteristic class $C_{X/k}(\mathcal{F})\in H^0(X,\mathcal{K}_{X/k})$ in \cite{AS07} using the Verdier pairing, where $\mathcal{K}_{X/k}=Rh^!\Lambda$ is the dualizing complex.

\subsection{}
Using the singular support defined by Beilinson \cite{Bei16}, Saito constructed the characteristic cycles and the geometric characteristic classes of constructible \'{e}tale sheaves \cite{Sai17}. Saito further proposed the following conjecture:

\begin{conjecture}[{\cite[Conjecture 6.8.1]{Sai17}}]
	Let $X$ be a closed subscheme of a smooth scheme over a perfect field $k$, and $\mathcal{F}$ a constructible complex of $\Lambda$ modules of finite Tor-amplitude on $X$. Let $cc_{X,0}(\mathcal{F})\in\mathrm{CH}_0(X)$ be the geometric characteristic class defined in \cite[Definition 6.7.2]{Sai17} via the characteristic cycle of $\mathcal{F}$. Then we have 
	\begin{equation}
		C_{X/k}(\mathcal{F})=\mathrm{cl}(cc_X(\mathcal{F}))\in H^0(X,\mathcal{K}_{X/k}),
	\end{equation}
	where $\mathrm{cl}:\mathrm{CH}_0(X)\to H^0(X,\mathcal{K}_{X/k})$ is the cycle class map.
\end{conjecture}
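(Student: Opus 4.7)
My plan is to combine the additivity of non-acyclicity classes established in this paper with a standard dévissage strategy that reduces the conjecture to a local, boundary-type computation where the wild ramification of $\mathcal{F}$ is directly accessible. The starting observation is that both $C_{X/k}(-)$ and $\mathrm{cl}(cc_{X,0}(-))$ enjoy parallel formal properties: additivity in distinguished triangles (which for the left-hand side should follow from the trace-like formula for non-acyclicity classes proved here, and for the right-hand side is Saito's additivity of characteristic cycles) and compatibility with proper pushforward (which is the index formula on the right, and a functoriality of the Verdier pairing on the left).

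Granting these parallel properties, the first step would be to use additivity along a smooth stratification $X = \bigsqcup X_\alpha$ adapted to $\mathcal{F}$ to reduce to the case where $\mathcal{F} = j_!\mathcal{L}$ with $j:U\hookrightarrow X$ an open immersion into a smooth $X$ and $\mathcal{L}$ a locally constant constructible sheaf on $U$. The second step would apply a de Jong alteration $\pi:X'\to X$ together with the pushforward compatibility to reduce to the case where $X$ is smooth projective and $D = X\setminus U$ is a simple normal crossings divisor. In this model situation, $cc_{X,0}(j_!\mathcal{L})$ is given by Saito's formula in terms of the logarithmic characteristic cycle and the Swan conductors along the components of $D$; one would then try to match this, component by component, with a corresponding decomposition of $C_{X/k}(j_!\mathcal{L})$ obtained from the open--closed localization sequence for $(j,i)$ and the nearby/vanishing cycles description of the Verdier pairing via non-acyclicity.

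The main obstacle will be the matching at each boundary component: it amounts to a conductor formula identifying the localization of $C_{X/k}$ along a boundary stratum with the Swan contribution to $cc_{X,0}$. In positive characteristic this is precisely where wild ramification enters, and it is the step for which the additivity and trace-like formalism developed here provides a clean bookkeeping framework but does not by itself supply the input. Saito's conjecture is currently known only in rather special cases (curves, isolated characteristic points, certain rank-one situations), so this local identification is the genuine deep content, and the place where further ramification-theoretic input — beyond the additivity proved in this paper — is needed to complete the argument.
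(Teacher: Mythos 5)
This statement is a \emph{conjecture} in the paper, not a theorem: the paper gives no proof of it, cites it only as motivation, and records that Yang and Zhao have established the quasi-projective case by the fibration method (\cite[Theorem 5.1]{YZ25}). So there is nothing in the paper for your argument to be compared against, and more importantly your proposal is not a proof --- you say so yourself in the final paragraph, where you concede that the boundary conductor identification ``is the genuine deep content'' for which the additivity formalism ``does not by itself supply the input.'' A dévissage that reduces a conjecture to an unproved local statement is a strategy outline, not an argument, and it cannot be accepted as a proof of the statement.

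Beyond that structural problem, several of your intermediate reductions are themselves far from formal. Compatibility of $C_{X/k}$ and of $\mathrm{cl}(cc_{X,0})$ with proper pushforward is not a bookkeeping fact: on the geometric side it is Saito's index formula, whose proof already contains most of the depth of the theory, and on the cohomological side the corresponding functoriality of the Verdier pairing under non-smooth proper maps requires the Künneth/Lefschetz--Verdier machinery of \cite{LZ22} and is exactly the kind of statement one is trying to control. The alteration step is also lossy: $\pi:X'\to X$ is generically finite but not an isomorphism, so one must track the discrepancy between $\pi_*cc_{X',0}(\pi^*\mathcal{F})$ and $\deg(\pi)\cdot cc_{X,0}(\mathcal{F})$ over the non-étale locus, which reintroduces the wild ramification you were trying to isolate. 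Finally, your closing remark that the conjecture ``is currently known only in rather special cases (curves, isolated characteristic points, certain rank-one situations)'' is out of date relative to the paper itself: the quasi-projective case is proved in \cite{YZ25}, precisely by the fibration method through non-acyclicity classes --- fibering $X$ over projective space and measuring the failure of local acyclicity by the classes $C^Z_{X/Y/S}(\mathcal{F})$ --- which is a genuinely different induction from the stratification-plus-alteration scheme you propose.
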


Yang and Zhao confirmed the quasi-projective case of Saito's conjecture \cite[Theorem 5.1]{YZ25}.

\subsection{}
While studying Saito's conjecture on characteristic classes, Yang and Zhao realized that it's important to construct a relative version of cohomological characteristic classes. Let $S$ be a noetherian scheme, $h:X\to S$ a separated morphism of finite type, $\Lambda$ a noetherian ring annihilated by some integer invertible on $S$, and $\mathcal{K}_{X/S}=Rh^!\Lambda$ the dualizing complex. Under certain smoothness and transverality conditions, Yang and Zhao introduced the relative cohomological characteristic class $C_{X/S}(\mathcal{F})\in H^0(X,\mathcal{K}_{X/S})$ in \cite{YZ22}.

\subsection{}
Lu and Zheng generalized the construction further to universally locally acyclic (ULA) complexes \cite{LZ22}. They constructed a monoidal category of cohomological correspondences over $S$, whose objects are pairs $(X;\mathcal{F})$, where $X$ is a scheme separated of finite type over $S$, and $\mathcal{F}$ a complex of $\Lambda$-modules on $X$. If $\mathcal{F}$ is constructible of finite Tor-amplitude, Lu and Zheng showed that $(X;\mathcal{F})$ is a dualizable object if and only if $\mathcal{F}$ is $h$-ULA. In this case, the relative cohomological characteristic class is given by the categorical trace:
\begin{equation*}
	(S;\Lambda)\xrightarrow{\mathrm{coev}}\mathcal{H}om_S((X;\mathcal{F}),(X;\mathcal{F}))=(X;\mathcal{F})\otimes_S\mathcal{H}om_S((X;\mathcal{F}),(S;\Lambda))\xrightarrow{\mathrm{ev}}(S;\Lambda).
\end{equation*}

\subsection{}
Yang and Zhao confirmed the quasi-projective case of Saito's conjecture using the fibration method. Consider a diagram of schemes separated of finite type over $S$
\begin{equation*}
	\begin{tikzcd}
		Z\arrow[r,hook,"i"]&X\arrow[r,"f"]\arrow[d,"h"]&Y\arrow[dl,"g"]\\
		&S,
	\end{tikzcd}
\end{equation*}
where $i:Z\hookrightarrow X$ is a closed immersion with open complement $j:U\hookrightarrow X$. Under certain smoothness conditions on $g$ and smallness conditions on $Z$, Yang and Zhao constructed a class $C_{X/Y/S}^Z(\mathcal{F})$ for a constructible complex $\mathcal{F}$ of finite Tor-amplitude on $X$ which is $f$-ULA outside of $Z$ and $h$-ULA \cite[4.3]{YZ25}, called the non-acyclicity class (NA class). They proved that $C_{X/Y/S}^Z(\mathcal{F})$ measures the difference between $C_{X/Y}(\mathcal{F})$ and $C_{X/S}(\mathcal{F})$ in a certain sense \cite[Theorem 5.1]{YZ25}. They also proved that $C_{X/Y/S}^Z(\mathcal{F})$ is a higher dimensional generalization of the classical Swan conductor and Artin conductor \cite[Theorem 6.1 and Theorem 6.5]{YZ25}.

\subsection{}
In this paper, we establish a categorical trace-like formula for NA class.
\begin{proposition}[c.f. \ref{def: non-localized NA class}]\label{prop: main prop on NA class}
	The non-localized NA class $C_{X/Y/S}(\mathcal{F})\in H^0(X,\mathcal{K}_{X/Y/S})$ is given by the following composition in the category $\mathrm{CohCorr}_S$ (c.f. \ref{subsec: CohCorr})
	\begin{equation*}
		(X;\Lambda)\xrightarrow{\mathrm{coev}_{X/Y}}\mathcal{H}om_Y((X;\mathcal{F}),(X;\mathcal{F}))\to\Delta_{Y/S}((X;\mathcal{F})\otimes_SD(\mathcal{F}/S))\xrightarrow{\mathrm{ev}_{X/S}}D(X/Y/S),
	\end{equation*}
	where $D(\mathcal{F}/S)=(X,D_{X/S}(\mathcal{F}))$ is the Verdier dual of $\mathcal{F}$ over $S$, $\Delta_{Y/S}:\mathrm{CohCorr}_{Y\times_SY}\to\mathrm{CohCorr}_Y$ is an enhanced version of the functor $\delta^{\triangle}$ defined in \cite[2.3]{YZ25}, and $D(X/Y/S)=(X,\mathcal{K}_{X/Y/S})$ is given by \cite[(4.4)]{YZ25}.
	
	The NA class $C_{X/Y/S}^Z(\mathcal{F})\in H^0_Z(X,\mathcal{K}_{X/Y/S})$ can be described in a similar fashion (c.f. \ref{def: NA class}).
\end{proposition}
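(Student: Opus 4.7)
The plan is to compare two realizations of the non-localized NA class inside the monoidal category $\mathrm{CohCorr}_S$: the categorical composition stated on the right and the Verdier-pairing construction of \cite[4.3]{YZ25}. First I would make each arrow of the composition explicit. The coevaluation $\mathrm{coev}_{X/Y}$ is the unit map of the tensor-hom adjunction for $\otimes_Y$ applied to $(X;\mathcal{F})$; note that even though $(X;\mathcal{F})$ is only $f$-ULA on the complement of $Z$ in the localized version, the unit map itself exists for any object and the non-localized statement does not require full $Y$-dualizability. The target $\mathcal{H}om_Y((X;\mathcal{F}),(X;\mathcal{F}))$ lives naturally over $Y$ and is viewed inside $\mathrm{CohCorr}_S$ by extension along $g$. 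The middle arrow is the canonical comparison morphism
\[
\mathcal{H}om_Y((X;\mathcal{F}),(X;\mathcal{F})) \to \Delta_{Y/S}\bigl((X;\mathcal{F}) \otimes_S D(\mathcal{F}/S)\bigr),
\]
which arises by combining the natural map from the external tensor product $(X;\mathcal{F}) \boxtimes_S D_{X/S}(\mathcal{F})$ into the internal $\mathcal{H}om$ with the $\delta^{\triangle}$-style pullback along the diagonal $Y \hookrightarrow Y \times_S Y$; the enhanced functor $\Delta_{Y/S}$ of \cite[2.3]{YZ25} is precisely the bivariant lift of this operation. Finally, $\mathrm{ev}_{X/S}$ is the counit of the $\otimes_S$-hom adjunction, landing in $(X;\mathcal{K}_{X/Y/S}) = D(X/Y/S)$.

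Next I would rewrite the Yang--Zhao construction of $C_{X/Y/S}(\mathcal{F})$ as an element of $\mathrm{Hom}_{\mathrm{CohCorr}_S}((X;\Lambda), D(X/Y/S))$. Their pairing factors through a two-step procedure: first a Verdier-type pairing internal to the $Y$-fiber producing a class valued in $\mathcal{H}om_Y(\mathcal{F},\mathcal{F})$, and then the $S$-level duality computing the dualizing complex $\mathcal{K}_{X/Y/S}$. I expect this factorization to match the stated categorical composition term-by-term: the $Y$-internal step corresponds to $\mathrm{coev}_{X/Y}$ followed by the comparison morphism, and the $S$-level step corresponds to $\mathrm{ev}_{X/S}$. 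The identification would then reduce to a diagram chase using the adjunctions for $\otimes_Y$ and $\otimes_S$, together with the compatibility between $\Delta_{Y/S}$ and proper base change along the diagonal.

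The main obstacle is the compatibility between the $Y$-level internal Hom and the $S$-level external tensor after diagonal pullback: concretely, I need a clean bivariant identification of $\mathcal{H}om_Y(\mathcal{F},\mathcal{F})$ with $\Delta_{Y/S}(\mathcal{F} \boxtimes_S D_{X/S}\mathcal{F})$, together with a precise description of the comparison morphism connecting them. This is a bivariant analogue of the classical equivalence $\mathcal{H}om(A,B) \simeq A^\vee \otimes B$ for dualizable objects, except that here it is precisely the failure of $S$-dualizability outside of $Z$ which the comparison records, and it is this failure that the NA class detects. Once this compatibility is established, the localized variant $C_{X/Y/S}^Z(\mathcal{F})$ should follow by the same argument after restricting to a neighborhood of $Z$ or passing to an appropriate cone, the $f$-ULA hypothesis outside $Z$ ensuring genuine duality data on the complement so that only the $Z$-supported contribution survives.
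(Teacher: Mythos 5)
Your overall strategy---unwind both the categorical composition and the Yang--Zhao construction and match them step by step, using the reinterpretation of the base-change map $c$ as a categorical operation---is indeed the route the paper takes (via \ref{subsec: c as coh corr}, \ref{def: triangle functor}, \ref{lem: compatible coev} and \ref{def: non-localized NA class}). However, there is a genuine conceptual error in how you set up the middle arrow, and it would derail the diagram chase you defer to. You assert that the comparison map $\mathcal{H}om_Y(\mathcal{F},\mathcal{F})\to\Delta_{Y/S}(\mathcal{F}\otimes_SD(\mathcal{F}/S))$ ``records the failure of $S$-dualizability outside of $Z$,'' and you pose as the main obstacle a ``bivariant identification'' of $\mathcal{H}om_Y(\mathcal{F},\mathcal{F})$ with $\Delta_{Y/S}(\mathcal{F}\boxtimes_SD_{X/S}\mathcal{F})$. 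Both points are backwards. The standing hypothesis for the non-localized class is that $\mathcal{F}$ \emph{is} dualizable in $\mathrm{CohCorr}_S$ (i.e.\ $h$-ULA); this is exactly what gives the isomorphism $\mathcal{H}om_S(\mathcal{F},\mathcal{F})\cong\mathcal{F}\otimes_SD(\mathcal{F}/S)$, which, combined with the adjunction identity $\mathcal{H}om_Y(\mathcal{F},\mathcal{F})=\mathcal{H}om_{Y\times_SY}(Y,\mathcal{H}om_S(\mathcal{F},\mathcal{F}))$, produces the middle arrow as the projection onto the cofiber defining $\Delta_{Y/S}$. That cofiber measures the failure of the \emph{$Y$-dualizability} map $\mathcal{F}\otimes_YD(\mathcal{F}/Y)\to\mathcal{H}om_Y(\mathcal{F},\mathcal{F})$ (see \eqref{eq: compatible dualizability}); it is the $f$-ULA condition on $U$ that makes $\Delta_{Y/S}(\mathcal{F}|_U\otimes_SD(\mathcal{F}|_U/S))$ vanish and localizes the class on $Z$. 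In particular there is no identification of $\mathcal{H}om_Y(\mathcal{F},\mathcal{F})$ with $\Delta_{Y/S}(\mathcal{F}\otimes_SD(\mathcal{F}/S))$ to be sought---only a map, whose fiber is (a shift of) $\mathcal{F}\otimes_YD(\mathcal{F}/Y)$ and is generally nonzero.

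Beyond that misattribution, the steps you flag as ``reducing to a diagram chase'' are where the actual content lies, and they are not routine: one needs (i) the identification of the cofiber functor $\Delta_{Y/S}$ with $\delta_g^{\triangle}$ of Yang--Zhao, which rests on recognizing their map $c$ as the canonical map $\mathcal{F}\otimes_Y\mathcal{H}om_Y(\mathcal{G},Y)\to\mathcal{H}om_Y(\mathcal{G},\mathcal{F})$ in $\mathrm{CohCorr}_Y$; (ii) the compatibility of coevaluation and evaluation over $Y$ with those over $S$ under the adjunction identities (the content of \ref{lem: compatible coev}); and (iii) the standing hypothesis that $g$ is cohomologically smooth (Assumption \ref{assum: smooth assumption}), which you never invoke but which is needed to make $D(Y/S)$ invertible and the bi-evaluation an isomorphism, hence to identify $D(\mathcal{F}/Y)$ with $D(\mathcal{F}/S)\otimes_YD(Y/Y\times_SY)$. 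As written, your proposal identifies the correct skeleton but leaves precisely these verifications open while mislocating where each ULA hypothesis enters, so it does not yet constitute a proof.
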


\subsection{}
Our approach to proving \ref{prop: main prop on NA class} is as follows: Reviewing the construction of the NA class, we notice that the base change functor $c$ (c.f. \cite[(2.3)]{YZ25}) can be explained by operations of cohomological correspondences (c.f. \ref{subsec: c as coh corr}). We can then rewrite each step in the construction of the NA class in \cite{YZ25} using the language of cohomological correspondences, and finally obtain \ref{prop: main prop on NA class}.

\subsection{}
In order to handle both the ULA condition over $Y$ and the ULA condition over $S$ categorically, we need to consider the compatibility between the category of cohomological correspondences over $Y$ and the category of cohomological correspondences over $S$. In fact, we can study the compatibility among the categories of cohomological correspondences over a base scheme separated of finite type over $S$ at the same time.

Our approach is to study a bivariant version of cohomological correspondences, which is a symmetric monoidal cocartesian fibration (c.f. \ref{def: BivCohCorr} and \ref{subsec: BivCohCorr to Corr})
\begin{equation*}
	\mathrm{BivCohCorr}_S\to\mathrm{Corr}_S,
\end{equation*}
whose fiber at $Y\in\mathrm{Corr}_S$ is the $\infty$-category $\mathrm{CohCorr}_Y$ of cohomological correspondences over $Y$ (which is an $\infty$-categorical variant of Lu and Zheng's category). $\mathrm{BivCohCorr}_S$ encodes the coherent data of compatibility among $\mathrm{CohCorr}_Y$ when the base scheme $Y$ separated of finite type over $S$ varies.

\subsection{}
We use our new description \ref{prop: main prop on NA class} to establish the additivity of NA class, which justifies that NA class is indeed a characteristic class:
\begin{theorem}[c.f. \ref{prop: additivity of non-localized NA class} and \ref{cor: additivity of NA class}]
	For a distinguished triangle $\mathcal{F}'\to\mathcal{F}\to\mathcal{F}''$ of constructible complexes of finite Tor-amplitude on $X$:
	\begin{enumerate}
		\item If $\mathcal{F}',\mathcal{F}$ and $\mathcal{F}''$ are $h$-ULA, then we have the following additivity in $H^0(X,\mathcal{K}_{X/Y/S})$
		\begin{equation*}
			C_{X/Y/S}(\mathcal{F})=C_{X/Y/S}(\mathcal{F}')+C_{X/Y/S}(\mathcal{F}'').
		\end{equation*}
		\item If in addition $\mathcal{F}',\mathcal{F}$ and $\mathcal{F}''$ are $f$-ULA outside a common $Z$, then we have the additivity of NA classes in $H^0_Z(X,\mathcal{K}_{X/Y/S})$
		\begin{equation*}
			C^Z_{X/Y/S}(\mathcal{F})=C^Z_{X/Y/S}(\mathcal{F}')+C^Z_{X/Y/S}(\mathcal{F}'').
		\end{equation*}
	\end{enumerate}
\end{theorem}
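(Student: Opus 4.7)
The plan is to deduce the additivity of NA classes from the categorical trace-like formula of Proposition \ref{prop: main prop on NA class} together with a May-type additivity statement for traces in symmetric monoidal stable $\infty$-categories. Recall that in such a category $\mathcal{C}$, if $c'\to c\to c''$ is a cofiber sequence of dualizable objects equipped with compatible endomorphisms $f',f,f''$, then $\mathrm{tr}(f)=\mathrm{tr}(f')+\mathrm{tr}(f'')$. This is the $\infty$-categorical incarnation of May's additivity theorem, which is considerably cleaner than its triangulated counterpart because the choice of cone in a stable $\infty$-category is canonical rather than auxiliary data.

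Given a distinguished triangle $\mathcal{F}'\to\mathcal{F}\to\mathcal{F}''$ of $h$-ULA constructible complexes on $X$, the assignment $\mathcal{F}\mapsto(X;\mathcal{F})$ is exact, so we obtain a cofiber sequence of objects in $\mathrm{CohCorr}_Y$ and in $\mathrm{CohCorr}_S$, each of which is dualizable by the ULA hypothesis (via the Lu--Zheng characterization of dualizable objects). I would first verify that the three maps appearing in the formula of Proposition~\ref{prop: main prop on NA class}, namely $\mathrm{coev}_{X/Y}$, the middle transformation $\mathcal{H}om_Y((X;\mathcal{F}),(X;\mathcal{F}))\to\Delta_{Y/S}((X;\mathcal{F})\otimes_SD(\mathcal{F}/S))$, and $\mathrm{ev}_{X/S}$, are all natural in $(X;\mathcal{F})$ with respect to the bivariant structure. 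The coevaluation and evaluation come from dualizability data, and the middle map arises from a canonical natural transformation in the bivariant formalism $\mathrm{BivCohCorr}_S\to\mathrm{Corr}_S$; since $\Delta_{Y/S}$ is an exact functor between stable $\infty$-categories, each constituent preserves cofiber sequences. Thus the NA class, being the composition, is additive on $\mathcal{F}$.

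The conceptual heart of the argument is that Proposition~\ref{prop: main prop on NA class} exhibits the NA class as a \emph{bivariant} trace mixing $Y$-duality (for $\mathrm{coev}_{X/Y}$) and $S$-duality (for $\mathrm{ev}_{X/S}$). The bivariant category $\mathrm{BivCohCorr}_S$ is precisely designed to encode the coherent interaction of these two dualities, so the composition can be interpreted as a single categorical trace in an appropriate sense, and the classical additivity theorem applies uniformly. The localized version (2) then follows by replacing $(X;\Lambda)$ and $D(X/Y/S)$ by their analogues supported on $Z$ in the formula for $C^Z_{X/Y/S}(\mathcal{F})$ (as indicated in \ref{def: NA class}); since the local cohomology/sections-with-support construction is exact, the same additivity argument goes through in $H^0_Z(X,\mathcal{K}_{X/Y/S})$.

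The main obstacle I anticipate is making precise the sense in which the three-step composition defining $C_{X/Y/S}(\mathcal{F})$ genuinely admits a single-variable interpretation that lets May's additivity apply off the shelf. Concretely, one must check that the functorial dependence of $\mathcal{H}om_Y((X;\mathcal{F}),(X;\mathcal{F}))$ on $\mathcal{F}$ through the cofiber sequence is encoded correctly: the $\mathcal{H}om$ is contravariant in the first variable and covariant in the second, so only the diagonal endomorphism structure (i.e., viewing the endofunctor $\mathcal{H}om_Y((X;-),(X;-))$ as evaluated on a single diagonal cofiber sequence) realizes the relevant endomorphisms of a cofiber sequence of dualizable objects. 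The bivariant framework resolves this by packaging both slots into a single dualizable datum, at which point the $\infty$-categorical May additivity finishes the proof.
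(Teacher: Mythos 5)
Your proposal identifies the right high-level strategy (combine the trace-like formula for $C_{X/Y/S}$ with a May-type additivity statement), and you even name the correct obstacle in your final paragraph; but you do not overcome that obstacle, and the step where you claim to is a genuine gap. The assertion that ``since $\Delta_{Y/S}$ is an exact functor, each constituent preserves cofiber sequences, thus the NA class is additive'' does not follow: additivity of the resulting elements of $\pi_0\mathrm{Hom}(X,D(X/Y/S))$ across a cofiber sequence is precisely the statement that fails for exact functors on triangulated categories in general (Ferrand's counterexample, cited in the paper, shows this already for honest traces), so exactness of the constituents cannot be the reason. Likewise, the claim that ``the bivariant framework packages both slots into a single dualizable datum, at which point $\infty$-categorical May additivity finishes the proof'' is unsubstantiated and is not what the paper does: the NA class is a composition $X\to\mathcal{H}om_Y(\mathcal{F},\mathcal{F})\to\Delta_{Y/S}(\mathcal{F}\otimes_S D(\mathcal{F}/S))\to D(X/Y/S)$ that mixes coevaluation over $Y$, the cofiber functor $\Delta_{Y/S}$, and evaluation over $S$; it is not the trace of an endomorphism of a single dualizable object, and the paper explicitly remarks that it is not easy to formulate a precise meaning of ``trace-like,'' i.e.\ that no off-the-shelf additivity theorem applies.

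What the paper actually does is run the May-type argument by hand on this specific composition, using the exact nine-diagram calculus of Section~\ref{sec: Nine-diagram}. Each of the three maps is promoted to a map of exact nine-diagrams
\begin{equation*}
	\mathrm{S}(X)\xrightarrow{\mathrm{coev}_{E_\alpha,X/Y}}\mathcal{H}om_Y(E_{\mathcal{F}},E_{\mathcal{G}})\to\Delta_{Y/S}\bigl(E_{\mathcal{G}}\otimes_S\mathcal{H}om_S(E_{\mathcal{F}},S)\bigr)\xrightarrow{\mathrm{ev}_{E_\beta,X/S}}\mathrm{T}(D(X/Y/S)),
\end{equation*}
where $\mathrm{S}$ and $\mathrm{T}$ are the degenerate source and target nine-diagrams of \ref{ex: source nine-diagram} and \ref{ex: target nine-diagram}; the coevaluation part is built by extending a lower nine-diagram via the equivalence \eqref{eq: extend half nine-diagram}, and the relation $\langle\alpha',\beta'\rangle-\langle\alpha,\beta\rangle+\langle\alpha'',\beta''\rangle=0$ is then extracted by applying the functor $\mathrm{ExNine}(\mathcal{C})\to\mathrm{ExTri}(\mathcal{C})$ of \ref{prop: nine-diagram to sequence}. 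This coherence construction is the actual content of the proof and is entirely absent from your proposal. Your treatment of part (2) inherits the same gap; there the paper additionally uses the vanishing $\Delta_{Y/S}(\mathcal{F}|_U\otimes_S D(\mathcal{F}|_U/S))=0$ to replace the middle term by $i_*i^!(-)$ before running the same nine-diagram argument into $\mathrm{T}(D_Z(X/Y/S))$.
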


\subsection{}
The corresponding statement for cohomological characteristic class is proved by Jin and Yang in \cite{JY21}. Their proof is a combination of Lu and Zheng's formalism and the following homotopy additivity theorem on categorical traces:
\begin{proposition}\cite[Theorem 1.9]{May01}
	Assume $(\mathcal{C},\otimes,1)$ is a triangulated category equipped with a compatible symmetric monoidal closed model structure (c.f. \cite[Definition 4.1]{May01}). 
	
	Let $(X\to Y\to Z)$ be a distinguished triangle in $\mathcal{C}$. Assume that $X,Y$ and $Z$ are dualizable objects in $\mathcal{C}$. Let $f:X\to X,g:Y\to Y$ be morphisms in $\mathcal{C}$ such that the solid arrows in the following diagram commutes
	\begin{equation*}
		\begin{tikzcd}
			X\arrow[r]\arrow[d,"f"]&Y\arrow[r]\arrow[d,"g"]&Z\arrow[r]\arrow[d,"h",dashed]&X[1]\arrow[d,"{-f[1]}"]\\
			X\arrow[r]&Y\arrow[r]&Z\arrow[r]&X[1].
		\end{tikzcd}
	\end{equation*}
	
	Then there is a map $h:Z\to Z$ filling this diagram, and satisfies the following equality
	\begin{equation*}
		\mathrm{Tr}(g)=\mathrm{Tr}(f)+\mathrm{Tr}(h),
	\end{equation*}
	where $DX$ is the dual of $X$ in $\mathcal{C}$, and $\mathrm{Tr}(f)$ is the categorical trace
	\begin{equation*}
		\mathrm{Tr}(f):1\xrightarrow{\mathrm{coev}}X\otimes DX\xrightarrow{f\otimes\mathrm{id}}X\otimes DX\xrightarrow{\mathrm{ev}}1.
	\end{equation*}
\end{proposition}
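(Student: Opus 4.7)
The plan is to split the argument into three steps: existence of a filling $h$, a filtration analysis of $Y\otimes DY$, and the additivity computation.

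First, I would obtain $h$ filling the diagram from TR3 applied to the morphism of distinguished triangles induced by $(f,g)$. In a general triangulated category such an $h$ is non-unique, and the role of the compatible symmetric monoidal closed model structure is to permit a functorial lift: realize $X\to Y\to Z$ as a homotopy cofiber sequence $X\hookrightarrow Y\twoheadrightarrow Z$ of cofibrant-dualizable objects, and take $h$ to be the induced map on the strict cofiber.

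Next, since $X,Y,Z$ are dualizable, the dual sequence $DZ\to DY\to DX$ is again distinguished and can be realized as a homotopy cofiber sequence. The external tensor product of the two sequences then equips $Y\otimes DY$ with a filtration whose associated graded pieces are $X\otimes DX$, $X\otimes DZ$, $Z\otimes DX$, and $Z\otimes DZ$. By naturality, the coevaluation $\mathrm{coev}_Y:1\to Y\otimes DY$ is controlled by $\mathrm{coev}_X$ on the corner $X\otimes DX$, while the evaluation $\mathrm{ev}_Y:Y\otimes DY\to 1$ is controlled by $\mathrm{ev}_Z$ on the corner $Z\otimes DZ$.

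The key computation is to decompose $\mathrm{Tr}(g)$, viewed as a composite through $Y\otimes DY$, into contributions indexed by the four filtration pieces. The two diagonal contributions will be exactly $\mathrm{Tr}(f)$ and $\mathrm{Tr}(h)$. The two off-diagonal contributions, through $X\otimes DZ$ and $Z\otimes DX$, each involve the boundary morphism $Z\to X[1]$ (or its dual), composed on one side with $f$ and on the other with $h$ via the commutative squares of the map of triangles. The sign $-f[1]$ imposed on the right-most square is precisely what forces these two cross terms to appear with opposite signs and to cancel, yielding $\mathrm{Tr}(g)=\mathrm{Tr}(f)+\mathrm{Tr}(h)$.

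I expect the main obstacle to be the cancellation of the cross terms. Assembling $\mathrm{Tr}(g)$ as a sum of four pieces is essentially formal, but the cancellation of the off-diagonal pieces requires the rigidity furnished by the pushout-product axiom of the compatible model structure; without it, the non-uniqueness of octahedra and the presence of phantom maps obstruct the needed identifications. This is the technical heart of May's theorem and explains why the statement requires not merely a triangulated tensor structure but a compatible model structure.
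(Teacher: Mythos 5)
First, be aware that the paper does not prove this proposition: it is quoted from \cite[Theorem 1.9]{May01} as background, and the additivity statements the paper actually establishes (\ref{prop: Additivity of trace in stable category} and \ref{prop: additivity of non-localized NA class}) are proved by a different mechanism, namely the exact nine-diagram formalism of Section \ref{sec: Nine-diagram}: one builds maps of nine-diagrams $\mathrm{S}(1)\to\mathcal{H}om(E,E)=E\otimes\mathcal{H}om(E,1)\to E\otimes\mathcal{H}om(E,1)\to\mathrm{T}(1)$ and passes to the associated exact triangles of \ref{prop: nine-diagram to sequence}, so that the relation $\mathrm{Tr}(f)-\mathrm{Tr}(g)+\mathrm{Tr}(h)=0$ falls out of functoriality. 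Your strategy --- bifiltering $Y\otimes DY$ by the tensor product of $X\to Y\to Z$ with its dual $DZ\to DY\to DX$ --- is instead the classical route of \cite{May01} and \cite{GPS13}. That is a genuinely different track from the paper's, and your first step (TR3 plus a point-set realization by cofibrations of cofibrant objects to pin down $h$) is standard and fine.

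The gap is in your third step. The claim that $\mathrm{Tr}(g)$ ``decomposes into four contributions indexed by the filtration pieces'' is the content of the theorem, not a formal consequence of having a filtration. Concretely, $\mathrm{coev}_Y$ lifts against the projection $Y\otimes DY\to Z\otimes DX=\mathcal{H}om(X,Z)$ only after choosing a null-homotopy of the composite $X\to Y\to Z$, and dually $\mathrm{ev}_Y$ descends along $X\otimes DZ\to Y\otimes DY$ only after choosing a null-homotopy on the other side; the theorem needs these choices to be made compatibly with each other and with $g\otimes\mathrm{id}_{DY}$, and the indeterminacy of such choices is exactly why the statement fails in a bare triangulated category (Ferrand's example \cite{Fer05}, cited in the paper). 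Saying that ``the pushout-product axiom furnishes the rigidity'' names the hypothesis but supplies no mechanism; most of May's proof consists of verifying his axiom (TC5) and extracting the compatible data from it. Moreover, your account of the cross terms is off: the contributions through the corners do not cancel in pairs via the sign on $-f[1]$. Indeed, the composite $1\xrightarrow{\mathrm{coev}_Y}Y\otimes DY\to Z\otimes DX$ is the class of $X\to Y\to Z$, hence zero, and $\mathrm{ev}_Y\circ(i\otimes Dp)=\mathrm{ev}_Z\circ(pi\otimes\mathrm{id})=0$ kills $X\otimes DZ$, so each off-diagonal corner contributes zero individually; the boundary map and its sign enter instead through the identification of the middle graded piece $X\otimes DX\oplus Z\otimes DZ$ and through the difference of two admissible lifts, which factors through a shift of $Z\otimes DX$. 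As written, the proposal is a correct statement of the strategy with the hard step deferred to the hypotheses.
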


In \cite{GPS13}, Groth, Ponto and Schulman proved a similar theorem in the context of monoidal derivators. 

\subsection{}
It is necessary to use homotopy-theoretic or $\infty$-categorical tools (such as model categories or derivators mentioned above). In fact, Ferrand constructed an example \cite{Fer05} of an endomorphism of a distinguished triangle in an (ordinary) derived category, whose traces do not satisfy the additivity relation.

\subsection{}
In the rest of this introduction, we prove the additivity of categorical traces in stable symmetric monoidal $\infty$-categories.

\begin{proposition}\label{prop: Additivity of trace in stable category}
	Let $(\mathcal{C},\otimes,1)$ be a stable symmetric monoidal $\infty$-category. Let $E=(X\to Y\to Z)$ be an exact triangle in $\mathcal{C}$ (c.f. \ref{subsec: exact triangle}), and $\alpha=(f,g,h):E\to E$ be an endomorphism of exact triangle which is depicted as follows
	\begin{equation}
		\begin{tikzcd}
			X\arrow[r]\arrow[d,"f"]&Y\arrow[r]\arrow[d,"g"]&Z\arrow[d,"h"]\\
			X\arrow[r]&Y\arrow[r]&Z.
		\end{tikzcd}
	\end{equation} 
	
	Then we have an equiality in $\pi_0\mathrm{Hom}_{\mathcal{C}}(1,1)$
	\begin{equation*}
		\mathrm{Tr}(g)=\mathrm{Tr}(f)+\mathrm{Tr}(h).
	\end{equation*}
\end{proposition}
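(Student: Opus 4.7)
The plan is to realize the categorical trace as an exact functor out of a suitable stable $\infty$-category, so that additivity becomes a direct consequence of exactness applied to a cofiber sequence of endomorphisms.

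First I would verify that all three objects $X$, $Y$, $Z$ are dualizable: dualizability propagates through cofiber sequences in a stable symmetric monoidal $\infty$-category, since the internal Hom $\mathcal{H}om(-,1)$ is exact. In particular the dual triangle $DZ\to DY\to DX$ exists and carries a dual endomorphism $(h^\vee,g^\vee,f^\vee)$, so that all three traces $\mathrm{Tr}(f)$, $\mathrm{Tr}(g)$, $\mathrm{Tr}(h)$ are well-defined elements of $\pi_0\mathrm{Hom}_\mathcal{C}(1,1)$.

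Next I would package the endomorphism of the exact triangle as a single object of the stable $\infty$-category $\mathcal{E}nd(\mathcal{C}^{dz}):=\mathrm{Fun}(B\mathbb{Z},\mathcal{C}^{dz})$ of endomorphisms of dualizable objects. Stability of $\mathcal{C}$ together with closure of $\mathcal{C}^{dz}$ under cofibers ensures that $\mathcal{E}nd(\mathcal{C}^{dz})$ is itself stable, and the exact triangle $E$ with endomorphism $\alpha=(f,g,h)$ is precisely the datum of a cofiber sequence $(X,f)\to(Y,g)\to(Z,h)$ in $\mathcal{E}nd(\mathcal{C}^{dz})$. The categorical trace assembles into a functor
\begin{equation*}
\mathrm{Tr}\colon\mathcal{E}nd(\mathcal{C}^{dz})\longrightarrow\Omega^\infty\mathrm{End}_\mathcal{C}(1),
\end{equation*}
whose target is the underlying infinite loop space of the endomorphism spectrum of the unit (well defined because $\mathcal{C}$ is stable and symmetric monoidal, making $\mathrm{End}_\mathcal{C}(1)$ into an $E_\infty$-ring spectrum). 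The desired additivity in $\pi_0\mathrm{End}_\mathcal{C}(1)$ will follow once we show $\mathrm{Tr}$ sends this cofiber sequence to a fiber sequence of spaces, by taking $\pi_0$ of the resulting fiber sequence in the $E_\infty$-group $\Omega^\infty\mathrm{End}_\mathcal{C}(1)$.

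The main obstacle is the exactness of $\mathrm{Tr}$. The cleanest route is to factor it through topological Hochschild homology: the categorical trace is the composite of the universal dimension map $\mathcal{E}nd(\mathcal{C}^{dz})\to\Omega^\infty\mathrm{THH}(\mathcal{C})$, which is exact by construction of $\mathrm{THH}$ as a cyclic bar complex in stable $\infty$-categories, with the structure map $\mathrm{THH}(\mathcal{C})\to\mathrm{End}_\mathcal{C}(1)$, which is exact as a morphism of spectra. Alternatively, one can produce the null-homotopy directly by extending $\alpha$ to an endomorphism of the full (homotopy-unique) octahedral diagram associated to $E$, and then pasting together the resulting evaluation and coevaluation $2$-simplices into $\mathrm{End}_\mathcal{C}(1)$; here the crucial role of $\infty$-categorical stability, as opposed to mere triangulation, is that the space of such extensions is contractible, and this coherence is precisely what rules out the ad hoc incoherent choices underlying Ferrand's counterexample.
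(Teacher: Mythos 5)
Your overall strategy---factor the trace through $\mathrm{THH}$ and deduce additivity from exactness---is the route of Ramzi \cite{Ram22}, which the paper explicitly cites as an alternative to its own argument; but as written your proposal has a genuine gap at exactly the step that carries all the weight. The assertion that the map $\mathcal{E}nd(\mathcal{C}^{dz})\to\Omega^\infty\mathrm{THH}(\mathcal{C})$ is ``exact by construction of $\mathrm{THH}$ as a cyclic bar complex'' is not a proof: the statement that the trace of a cofiber sequence of endomorphisms is the sum of the traces of its ends is precisely the theorem to be proved, and its $\mathrm{THH}$ incarnation is a nontrivial result (it rests on the additivity theorem for $\mathrm{THH}$ as an invariant of stable $\infty$-categories applied to a suitable split Verdier sequence, not on the bar construction per se). There is also a typechecking problem: the categorical trace is \emph{not} functorial in arbitrary morphisms of pairs $(X,f)\to(Y,g)$ (only in equivalences, or more precisely it carries the structure of a trace theory), so ``$\mathrm{Tr}$ is an exact functor on $\mathcal{E}nd(\mathcal{C}^{dz})$ sending cofiber sequences to fiber sequences'' does not make sense without substantial additional construction; relatedly, $\mathrm{Fun}(B\mathbb{Z},\mathcal{C})$ classifies automorphisms, not endomorphisms. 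Your fallback route via the octahedron is too vague to assess, and the claim that ``the space of such extensions is contractible'' is not correct in general---the possible fill-ins of $h$ form a torsor, and this non-uniqueness is exactly the subtlety behind Ferrand's counterexample; what saves the day is that $h$ is \emph{given} as part of a coherent endomorphism of the exact triangle, not that extensions are unique.

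For comparison, the paper's proof is elementary and avoids $\mathrm{THH}$ entirely: it builds the functor $\mathrm{ExNine}(\mathcal{C})\to\mathrm{ExTri}(\mathcal{C})$ of Proposition \ref{prop: nine-diagram to sequence} (a totalization of a nine-diagram with all rows and columns exact), applies $\mathrm{coev}$, $\alpha\otimes\mathrm{id}$ and $\mathrm{ev}$ levelwise to get maps of exact nine-diagrams $\mathrm{S}(1)\to E\otimes\mathcal{H}om(E,1)\to\mathrm{T}(1)$, and reads off $\mathrm{Tr}(f)-\mathrm{Tr}(g)+\mathrm{Tr}(h)=0$ from the induced map on associated exact triangles, using the explicit computations in \ref{ex: source nine-diagram} and \ref{ex: target nine-diagram}. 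That explicitness is what lets the same argument apply verbatim to the non-acyclicity classes, which are only ``trace-like'' and would not obviously factor through $\mathrm{THH}$.
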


\subsection{}
Ramzi proved a similar results using additivity of topological Hochschild homology \cite{Ram22}. Our proof is more explicit, and has the advantage of being applicable to the additivity of NA classes.
Our method is based on a simple observation:
\begin{lemma}[c.f. \ref{prop: nine-diagram to sequence}]
	Let $(\mathcal{C},\otimes,1)$ be a stable symmetric monoidal $\infty$-category. We can associated functorially to every exact nine-diagram (c.f. \ref{def: nine-diagram}), i.e. a diagram
	\begin{equation*}
		\begin{tikzcd}
			X_{0,0}\arrow[r]\arrow[d]&X_{0,1}\arrow[r]\arrow[d]&X_{0,2}\arrow[d]\\
			X_{1,0}\arrow[r]\arrow[d]&X_{1,1}\arrow[r]\arrow[d]&X_{1,2}\arrow[d]\\
			X_{2,0}\arrow[r]&X_{2,1}\arrow[r]&X_{2,2},
		\end{tikzcd}
	\end{equation*}
	whose every columns and every rows are exact triangles, an exact triangle
	\begin{equation*}
		\mathrm{cofib}(d_0)\to C_2\to\mathrm{fib}(d_3),
	\end{equation*}
	where the chain $C_0\xrightarrow{d_0}C_1\xrightarrow{d_1}C_2\xrightarrow{d_2}C_3\xrightarrow{d_3}C_4$ is given by
	\begin{equation*}
		\begin{aligned}
			&C_i=\oplus_{j+k=i}X_{j,k},\\
			&d_i|_{X_{j,k}}=d^h_{j,k}+(-1)^{j+k}d^v_{j,k}.
		\end{aligned}
	\end{equation*}
\end{lemma}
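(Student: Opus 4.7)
The plan is to proceed in two stages: first, construct canonical maps $\mathrm{cofib}(d_0)\to C_2$ and $C_2\to\mathrm{fib}(d_3)$ by exhibiting coherent null-homotopies of $d_1\circ d_0$ and $d_3\circ d_2$; second, verify that the resulting sequence is an exact triangle by identifying its terms with iterated fibers and cofibers of subsquares of the nine-diagram.

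For the first stage, I would expand each composition $d_{i+1}\circ d_i$ over the summands $X_{j,k}$ of $C_i$. The contributions come in three families: pure-horizontal $d^h\circ d^h$ terms killed by exactness of the relevant row; pure-vertical $d^v\circ d^v$ terms killed by exactness of the relevant column; and mixed terms $d^v d^h\pm d^h d^v$ killed by the commutativity 2-cell of the corresponding $[1]\times[1]$ subsquare. The signs $(-1)^{j+k}$ in the definition of $d_i|_{X_{j,k}}$ are chosen precisely so that the Koszul signs of the mixed terms match up, which lets the three families of null-homotopies glue coherently and yields the required maps.

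For the second stage, I would use the standard fact that an exact nine-diagram is determined up to equivalence by its upper-left $2\times 2$ subsquare (the remaining entries being iterated cofibers), so the construction above is a finite-colimit-preserving functor of squares in $\mathcal{C}$; a universality argument then reduces the exactness check to the free square on $[1]\times[1]$. One identifies $\mathrm{cofib}(d_0)\simeq X_{0,1}\sqcup_{X_{0,0}} X_{1,0}$ and $\mathrm{fib}(d_3)\simeq X_{1,2}\times_{X_{2,2}} X_{2,1}$, invokes the exact triangles $\mathrm{cofib}(d_0)\to X_{1,1}\to X_{2,2}$ and $X_{0,0}\to X_{1,1}\to\mathrm{fib}(d_3)$ coming from the upper-left and lower-right subsquares as their respective total cofiber and total fiber, and shows that the auxiliary summands $X_{0,2},X_{2,0}$ of $C_2$ cancel against $X_{1,2},X_{2,1}$ along the outer row and column exact triangles in the cofiber of $\mathrm{cofib}(d_0)\to C_2$. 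The main obstacle I expect is this last cancellation, which has to take place via the specific null-homotopies produced in the first stage and not merely up to abstract equivalence; a clean way to manage it is to present the nine-diagram as a bifiltered object in $\mathcal{C}$, so that $C_\bullet$ appears as the totalization of the filtration and the desired exact triangle falls out of iterated pushout/pullback manipulations, with the $(-1)^{j+k}$ absorbing the Koszul signs inherent in totalization.
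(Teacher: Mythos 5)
Your proposal correctly identifies the two things that must be produced (the maps $\mathrm{cofib}(d_0)\to C_2$ and $C_2\to\mathrm{fib}(d_3)$ via null-homotopies of $d_1d_0$ and $d_3d_2$, and then the exactness of the resulting triangle), and your componentwise analysis of the null-homotopies and of the sign $(-1)^{j+k}$ is accurate. It also correctly records the identifications $\mathrm{cofib}(d_0)\simeq X_{0,1}\sqcup_{X_{0,0}}X_{1,0}$, $\mathrm{fib}(d_3)\simeq X_{1,2}\times_{X_{2,2}}X_{2,1}$ and the exact triangles $\mathrm{cofib}(d_0)\to X_{1,1}\to X_{2,2}$ and $X_{0,0}\to X_{1,1}\to\mathrm{fib}(d_3)$. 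However, there is a genuine gap: the step you yourself flag as ``the main obstacle'' --- the cancellation of $X_{0,2}\oplus X_{2,0}$ against $X_{1,2}\oplus X_{2,1}$ in $\mathrm{cofib}(\mathrm{cofib}(d_0)\to C_2)$ --- is precisely the content of the lemma, and it is asserted rather than proved. Since the map $\mathrm{cofib}(d_0)\to C_2$ does not factor through a single summand, its cofiber cannot be computed summand by summand, and ``presenting the nine-diagram as a bifiltered object so that the triangle falls out of iterated pushout/pullback manipulations'' is a plan, not an argument. A second, related gap is $\infty$-categorical coherence: choosing null-homotopies of the $d^h d^h$, $d^v d^v$ and mixed components separately and declaring that they ``glue coherently'' and are functorial in the nine-diagram is exactly the kind of step that requires a construction, not a verification; the proposed reduction to a ``free square'' presupposes that functoriality, so it cannot be used to supply it.

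The paper's proof avoids both issues by a more economical route. Lemma \ref{lem: FD functor} packages, as an equivalence of functor categories obtained from \cite[4.3.2.15]{HTT}, the passage from an exact square with corners $X,Y,Z,W$ to the exact triangle $X\to Y\oplus Z\to W$; this is applied to the top-left square (producing $\mathrm{cofib}(d_0)$), to the bottom-right square (producing $\mathrm{fib}(d_3)$), and the two resulting configurations are pasted along exact squares (diagrams \eqref{eq: cofib d_0 induces} and \eqref{eq: fib d_3 induces}) to exhibit the exact square
\begin{equation*}
	\begin{tikzcd}
		\mathrm{cofib}(d_0)\arrow[r]\arrow[d]&X_{0,2}\oplus X_{2,0}\arrow[d]\\
		X_{1,1}\arrow[r]&\mathrm{fib}(d_3)
	\end{tikzcd}
\end{equation*}
directly; one final application of \ref{lem: FD functor} then yields $\mathrm{cofib}(d_0)\to C_2\to\mathrm{fib}(d_3)$, with functoriality and all homotopy coherences supplied automatically by the Kan-extension equivalences. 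If you want to salvage your approach, the missing cancellation should be replaced by exactly this pasting argument; otherwise you would need to actually construct the bifiltration and carry out the totalization, which amounts to redoing the same work in heavier notation.
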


\subsection{}
Given an endomorphism of exact triangle $\alpha:E\to E$ in $\mathcal{C}$, we have the following maps of exact nine-diagrams
\begin{equation*}
	\mathrm{S}(1)\xrightarrow{\mathrm{coev}}\mathcal{H}om(E,E)=E\otimes\mathcal{H}om(E,1)\xrightarrow{\alpha\otimes\mathrm{id}}E\otimes\mathcal{H}om(E,1)\xrightarrow{\mathrm{ev}}\mathrm{T}(1),
\end{equation*}
where $\mathrm{S}(1),\mathrm{T}(1)$ are degenerate exact nine-diagrams (c.f. \ref{ex: source nine-diagram} and \ref{ex: target nine-diagram}), the $\mathrm{coev}$ part and the $\mathrm{ev}$ part are obtained by adjunction, and the isomorphism is obtained by dualizability. The proof is completed by passing to the associated exact triangles.

\subsection{}
Our proof of the additivity of NA classes is essentially the same as the above discussion. In fact, our method can be applied to prove the additivity of any trace-like natural transformation. Since it is not easy to formulate a precise meaning of being trace-like, we won't pursue this point further in this paper.

\subsection{}
The article is organized as follows. In section \ref{sec: NA class}, we introduce a bivariant version of cohomological correspondences, and use it to establish a categorical trace-like formula for the non-acyclicity class (NA class). In section \ref{sec: Nine-diagram}, we discuss a special kind of diagram called exact nine-diagrams, which plays an important role in our proof of the additivity. In section \ref{sec: Additivity}, we prove the additivity of the NA classes.

\subsection*{Acknowledgments} 
The author would like to express his sincere gratitude to Enlin Yang for his careful reading and many improvement suggestions. The author thanks Zhenpeng Li and Xiangyu Pan for their comments.
The author learned the technical proposition \ref{prop: nine-diagram to sequence} from a note of Enlin Yang\footnote{The note is available at Yang's homepage \href{https://www.math.pku.edu.cn/teachers/yangenlin/PFF}{https://www.math.pku.edu.cn/teachers/yangenlin/PFF}.}. This work is partially supported by National Key R\&D program 2021YFA1001400, NSFC general program 12271006 and Beijing Natural Science Foundation QY23005.

\subsection*{Notation and Conventions}
We freely use the language of higher category theory and higher algebra as developed in \cite{Kerodon,HTT,HA}. All of our notions will be assumed to be homotopical or $\infty$-categorical, and we suppress this from our notation. For instance, we use the word \emph{category} to mean $(\infty,1)$-category, the word \emph{isomorphism} to mean a morphism invertible up to homotopy, and the word \emph{unique} to mean unique up to contractible choice.
\begin{enumerate}
	\item We always use the word \emph{diagram} to mean (coherent) commutative diagrams.
	\item Let $\mathrm{Cat}$ be the category of categories.
	\item Let $\Delta=\{[n]=\{0<\cdots<n\}:n\in\mathbb{N}\}\subset\mathrm{Cat}$ be the simplex category.
	\item Let $\square=[1]\times[1]$ be the index category of square diagram.
	\item Let $\mathrm{Hom}_{\mathcal{C}}(X,Y)$ be the space of morphisms from $X$ to $Y$ in $\mathcal{C}$.
	\item Let $\mathrm{Fun}(\mathcal{C},\mathcal{D})$ be the category of functors from $\mathcal{C}$ to $\mathcal{D}$.
	\item Let $\mathrm{Fin}_*$ be the category of pointed finite sets, whose objects are (up to isomorphism) of the form $\langle n\rangle=\{*,1,\cdots,n\}$ with base-point $*$.
	\item For a symmetric monoidal category $\mathcal{C}$, let $\mathcal{C}^{\otimes}\in\mathrm{Cat}_{/\mathrm{Fin}_*}$ be its category of operations.
	\item For functor $F:\mathcal{C}\to\mathrm{Cat}$, let $\int_{\mathcal{C}}F$ be its category of covariant elements, i.e. $\int_{\mathcal{C}}F\to\mathcal{C}$ is the unique cocartesian fibration whose covariant transport functor is $F$.
	\item We use $f^*,f_*,f_!,f^!,\otimes_X,\mathcal{H}om_X$ to denote the six functors. In particular, we omit $L$ or $R$ from the notation, while always dealing with derived functors.
	\item We denote a correspondence $(X\xleftarrow{g}C\xrightarrow{f}Y)$ in $\mathrm{Sch}_S$ by $[g_!f^*]$.
	\item In a diagram of cohomological correspondences, we indicate cocartesian edge by $\twoheadrightarrow$, and cartesian edge by $\hookrightarrow$.
\end{enumerate}

\section{Non-acyclicity classes revisited}\label{sec: NA class}
In this section, we will construct a bivariant version of cohomological correspondences introduced in \cite{LZ22}, and then use it to establish a categorical trace-like formula for the non-acyclicity class (NA class) introduced in \cite{YZ25}.

\subsection{}\label{subsec: CohCorr}
We first recall the category of cohomological correspondences.
\begin{enumerate}[label=(\alph*)]
	\item Let $S$ be a noetherian scheme, $n\in\mathbb{N}$ invertible on $S$, $\Lambda$ a ring with $n\Lambda=0$.
	\item Let $\mathrm{Sch}_S$ be the category of schemes separated of finite type over $S$, $\mathrm{Corr}_S=\mathrm{Corr}(\mathrm{Sch}_S)$ the category of correspondences over $S$.
	\item For convenience, let $\mathcal{D}$ be the six-functor formalism of \'{e}tale cohomology on $S$ with $\Lambda$-coefficient. Most of the arguments in this paper also hold for a general six-functor formalism.
	\item Let $\mathrm{CohCorr}_S=\int_{\mathrm{Corr}_S}\mathcal{D}$ be the symmetric monoidal category of cohomological correspondences over $S$.
\end{enumerate}

\subsection{}
$\mathrm{CohCorr}_S$ can be described as follows:
\begin{enumerate}[label=(\alph*)]
	\item An object is of the form $(X;\mathcal{F})$ where $X\in\mathrm{Sch}_S$, $\mathcal{F}\in\mathcal{D}(X)$. If no confuse will be caused, we will abbreviate $(X;\mathcal{F})$ to $\mathcal{F}$ and $(X;\Lambda)$ to $X$.
	\item A morphism $(X;\mathcal{F})\to(Y;\mathcal{G})$ is given by a correspondence $[g_!f^*]:(X\xleftarrow{f}C\xrightarrow{g}Y)$ together with a map $g_!f^*\mathcal{F}\to\mathcal{G}$ in $\mathcal{D}(Y)$.
	\item The tensor product is given by
	\begin{equation}
		(X;\mathcal{F})\otimes_S(Y;\mathcal{G})=(X\times_SY;\mathcal{F}\boxtimes_S\mathcal{G}=\mathrm{pr}_X^*\mathcal{F}\otimes_{X\times_SY}\mathrm{pr}_Y^*\mathcal{G}).
	\end{equation}
	\item $\mathrm{CohCorr}_S$ has all internal Hom, which is given by
	\begin{equation}
		\mathcal{H}om_S((X;\mathcal{F}),(Y;\mathcal{G}))=(X\times_SY;\mathcal{H}om_{X\times_SY}(\mathrm{pr}_X^*\mathcal{F},\mathrm{pr}_Y^!\mathcal{G})).
	\end{equation}
\end{enumerate}

\begin{definition}
	For scheme $S\in\mathrm{Sch}$, define a symmetric monoidal category $\mathrm{BivCorr}_S$ as follows:
	\begin{enumerate}[label=(\alph*)]
		\item An object is of the form $(X/Y)$ where $X\xrightarrow{f}Y$ a map in $\mathrm{Sch}_S$.
		\item A morphism $[p_!q^*/u_!v^*]:(X/Y)\to(X'/Y')$ is given by a correspondence $[u_!v^*]:Y\to Y'$ together with a compatible correspondence $[p_!q^*]:X\to X'$, i.e. a diagram in $\mathrm{Sch}_S$
		\begin{equation}\label{eq: morphism in BivCorr}
			\begin{tikzcd}
				X\arrow[d]&E\arrow[l,"q"]\arrow[r,"p"]\arrow[d]&X'\arrow[d]\\
				Y&C\arrow[l,"v"]\arrow[r,"u"]&Y'.
			\end{tikzcd}
		\end{equation}
		\item The tensor product is given by
		\begin{equation}
			(X/Y)\otimes(X'/Y')=(X\times_SX'/Y\times_SY').
		\end{equation}
	\end{enumerate}
\end{definition}

\subsection{}
We have the following observations:
\begin{enumerate}[label=(\alph*)]
	\item $\mathrm{BivCorr}_S$ is the category of covariant elements of a lax symmetric monoidal functor
	\begin{equation}
		\mathrm{Corr}_{\bullet/S}:\mathrm{Corr}_S\to\mathrm{Cat},
	\end{equation}
	which sends object $Y\in\mathrm{Corr}_S$ to $\mathrm{Corr}_Y$, and sends a correspondence $[v_!u^*]:Y\to Y'$ to
	\begin{equation}
		v_!u^*:\mathrm{Corr}_Y\to\mathrm{Corr}_{Y'}:(X\to Y)\mapsto(u^*X\to Y').
	\end{equation}
	
	\item $\mathrm{BivCorr}_S$ has internal Hom
	\begin{equation}
		\mathcal{H}om((X/Y),(X'/Y'))=(X\times_SX'/Y\times_SY').
	\end{equation}
	In particular, every object of $\mathrm{BivCorr}_S$ is dual to itself.
\end{enumerate}

\begin{definition}\label{def: BivCohCorr}
	We have a lax symmetric monoidal functor
	\begin{equation}
		\mathrm{BivCorr}_S\to\mathrm{Cat},(X/Y)\mapsto\mathcal{D}(X).
	\end{equation}
	Let $\mathrm{BivCohCorr}_S$ be its category of covariant elements.
\end{definition}

\subsection{}
Unwinding the definitions, we can describe $\mathrm{BivCohCorr}_S$ as follows:
\begin{enumerate}[label=(\alph*)]
	\item An object is $(X/Y;\mathcal{F})$ where $(X\xrightarrow{f}Y)$ a map in $\mathrm{Sch}_S$, $\mathcal{F}\in\mathcal{D}(X)$.
	\item A morphism $(X/Y;\mathcal{F})\to(X'/Y';\mathcal{F}')$ is given by a diagram \eqref{eq: morphism in BivCorr} together with a map $(p_!q^*\mathcal{F}\to\mathcal{F}')$ in $\mathcal{D}(X')$.
	\item The tensor product is given by
	\begin{equation}
		(X/Y;\mathcal{F})\otimes(X'/Y';\mathcal{F}')=(X\times_SX'/Y\times_SY';\mathcal{F}\boxtimes_S\mathcal{F}').
	\end{equation}
	\item The internal Hom is given by
	\begin{equation}
		\mathcal{H}om((X/Y;\mathcal{F}),(X'/Y';\mathcal{F}'))=(X\times_SX'/Y\times_SY';\mathcal{H}om_{X\times_SX'}(\mathrm{pr}_1^*\mathcal{F},\mathrm{pr}_2^!\mathcal{F}')).
	\end{equation}
\end{enumerate}

\subsection{}\label{subsec: BivCohCorr to Corr}
The symmetric monoidal cocartesian fibration $\mathrm{BivCohCorr}_S\to\mathrm{BivCorr}_S\to\mathrm{Corr}_S$ has fiber $\mathrm{CohCorr}_Y$ at $Y$. We can then regard an object $(X;\mathcal{F})\in\mathrm{CohCorr}_S$ together with a structure map $X\to Y$ in $\mathrm{Sch}_S$ as an object $(X/Y;\mathcal{F})\in\mathrm{BivCohCorr}_S$. We often use this point of view implicitly.

\subsection{}\label{subsec: cov trans of BivCohCorr to Corr}
Many operations are covariant transport of $\mathrm{BivCohCorr}_S^{\otimes}\to\mathrm{Corr}_S^{\otimes}$.

\begin{enumerate}[label=(\alph*)]
	\item The exterior product is covariant transport over $(Y,Y')\twoheadrightarrow Y\times_SY'$
	\begin{equation}
		(X;\mathcal{F})\boxtimes(X';\mathcal{F}')=(X\times_SX',\mathcal{F}\boxtimes_S\mathcal{F}').
	\end{equation}
	\item The tensor product over $Y$ is covariant transport over $[(\mathrm{id},\mathrm{id})^*]:(Y,Y)\to Y$
	\begin{equation}
		(X;\mathcal{F})\otimes_Y(X';\mathcal{F}')=(X\times_YX';\mathcal{F}\boxtimes_Y\mathcal{F}').
	\end{equation}
	\item For $f:Y'\to Y$ in $\mathrm{Sch}_S$, the covariant transport over $[f^*]:Y\to Y'$ is given by
	\begin{equation}
		f^*(X;\mathcal{F})=(X;\mathcal{F})\otimes_YY'=(X';(f')^*\mathcal{F}),
	\end{equation}
	where $f':X'=X\times_YY'\to X$ is the base change of $f$.
	\item The covariant transport over $[f_!]$ is given by
	\begin{equation}
		f_!(X;\mathcal{}F)=(X;\mathcal{F}),
	\end{equation}
	where only base scheme is changed. We often apply this functor implicitly to regard an object of $\mathrm{CohCorr}_{Y'}$ as an object of $\mathrm{CohCorr}_Y$.
\end{enumerate}

\subsection{}
By \ref{subsec: cov trans of BivCohCorr to Corr}, for $\mathcal{F},\mathcal{G}\in\mathrm{CohCorr}_Y$, we have canonical natural isomorphisms:
\begin{align}
	&\mathcal{F}\otimes_Y\mathcal{G}=\mathcal{F}\otimes_S\mathcal{G}\otimes_{Y\times_SY}Y,\\
	&\mathcal{H}om_Y(\mathcal{F},\mathcal{G})=\mathcal{H}om_{Y\times_SY}(Y,\mathcal{H}om_S(\mathcal{F},\mathcal{G})),\\
	&\mathcal{H}om_S(\mathcal{F},\mathcal{G})=\mathcal{H}om_Y(\mathcal{F},\mathcal{H}om_S(Y,\mathcal{G})).
\end{align}

We will use these identifications repeatedly in this paper.

\subsection{}
We recall a standard construction in \cite{YZ25}. Consider a pullback diagram in $\mathrm{Sch}_S$:
\begin{equation}
	\begin{tikzcd}
		W\arrow[r,"i"]\arrow[d,"p"]\arrow[dr,very near start,phantom,"\lrcorner"]&X\arrow[d,"f"]\\
		Z\arrow[r,"\delta"]&Y.
	\end{tikzcd}
\end{equation}

For $\mathcal{F}\in\mathcal{D}(X),\mathcal{G}\in\mathcal{D}(Y)$, there is a canonical map \cite[(2.3)]{YZ25}
\begin{equation}\label{eq: map c}
	c_{\delta,f,\mathcal{F},\mathcal{G}}:i^*\mathcal{F}\otimes_Wp^*\delta^!\mathcal{G}\to i^!(\mathcal{F}\otimes_Xf^*\mathcal{G}),
\end{equation}
which is obtained by the following composition
\begin{equation}
	\begin{tikzcd}
		i^*\mathcal{F}\otimes_Wp^*\delta^!\mathcal{G}\xrightarrow{\mathrm{unit}}i^!i_!(i^*\mathcal{F}\otimes_Wp^*\delta^!\mathcal{G})=i^!(\mathcal{F}\otimes_Xf^*\delta_!\delta^!\mathcal{G})\xrightarrow{\mathrm{counit}}i^!(\mathcal{F}\otimes_Xf^*\mathcal{G}).
	\end{tikzcd}
\end{equation}

When $\mathcal{F}=\Lambda$, \eqref{eq: map c} recovers the base change map
\begin{equation}
	p^*\delta^!\mathcal{G}\to i^!f^*\mathcal{G}.
\end{equation}

When $\mathcal{G}=\Lambda$, \eqref{eq: map c} recovers the relative purity map
\begin{equation}
	i^*\mathcal{F}\otimes_Wp^*\delta^!\Lambda\to i^!\mathcal{F}.
\end{equation}

\subsection{}\label{subsec: c as coh corr}
For $(X;\mathcal{F}),(Z;\mathcal{G})\in\mathrm{CohCorr}_Y$, we have a natural map
\begin{equation}\label{eq: c as coh corr}
	\mathcal{F}\otimes_Y\mathcal{H}om_Y(\mathcal{G},Y)\to\mathcal{H}om_Y(\mathcal{G},\mathcal{F}\otimes_YY)=\mathcal{H}om_Y(\mathcal{G},\mathcal{F}),
\end{equation}
which is obtained by adjunction from
\begin{equation}
	\mathcal{F}\otimes_Y\mathcal{G}\otimes_Y\mathcal{H}om_Y(\mathcal{G},Y)\xrightarrow{\mathrm{ev}}\mathcal{F}\otimes_YY=\mathcal{F}.
\end{equation}

Unwinding the definition, \eqref{eq: c as coh corr} recovers the map \eqref{eq: map c}
\begin{equation}
	c:(W;i^*\mathcal{F}\otimes_Wp^*\delta^!\mathcal{G})\to(W;i^!(\mathcal{F}\otimes_Xf^*\mathcal{G})).
\end{equation}

\subsection{}
Recall that $\forall X\in\mathrm{Sch}_S$, $\mathcal{D}(X)$ is a presentable stable symmetric monoidal category. An exact sequence in $\mathrm{CohCorr}_Y$ or in $\mathrm{BivCohCorr}_S$ is an exact sequence in some fiber $\mathcal{D}(X)$. The operation ``taking cofiber'' is understood similarly.

\begin{definition}\label{def: triangle functor}
	For $f:X\to Y$ in $\mathrm{Sch}_S$, $\mathcal{F}\in\mathcal{D}(X)$, define $D(\mathcal{F}/Y)=\mathcal{H}om_Y(\mathcal{F},Y)\in\mathrm{CohCorr}_S$. Use the notation $\mathcal{K}_{X/Y}=f^!\Lambda$ as in \cite{YZ25}, we have $D(X/Y)=(X,\mathcal{K}_{X/Y})$, and $D(\mathcal{F}/Y)$ recovers the Verdier dual
	\begin{equation}\label{eq: D_F/Y}
		D(\mathcal{F}/Y)=(X,\mathcal{H}om_X(\mathcal{F},\mathcal{K}_{X/Y})).
	\end{equation}
	
	We have a natural transformation in $\mathrm{Fun}(\mathrm{CohCorr}_{Y\times_SY},\mathrm{CohCorr}_Y)$
	\begin{equation}
		\bullet\otimes_{Y\times_SY}D(Y/Y\times_SY)\xrightarrow{\eqref{eq: c as coh corr}}\mathcal{H}om_{Y\times_SY}(Y,\bullet).
	\end{equation}
	
	Define a functor $\Delta_{Y/S}:\mathrm{CohCorr}_{Y\times_SY}\to\mathrm{CohCorr}_Y$ as the following cofiber
	\begin{equation}
		\Delta_{Y/S}(\bullet)=\mathrm{cofib}(\bullet\otimes_{Y\times_SY}D(Y/Y\times_SY)\to\mathcal{H}om_{Y\times_SY}(Y,\bullet)).
	\end{equation}
\end{definition}

\subsection{}
By \ref{subsec: c as coh corr}, for every pullback diagram in $\mathrm{Sch}_S$
\begin{equation}
	\begin{tikzcd}
		W\arrow[d]\arrow[r]\arrow[dr,phantom,very near start,"\lrcorner"]&Z\arrow[d]\\
		Y\arrow[r,"\delta_g"]&Y\times_SY,
	\end{tikzcd}
\end{equation}
the functor $\Delta_{Y/S}:\mathcal{D}(Z)\to\mathcal{D}(W)$ recovers the functor $(\delta_g)^{\Delta}$ constructed in \cite[2.3]{YZ25}.
	
For $h:X\xrightarrow{f}Y\xrightarrow{g}S$ in $\mathrm{Sch}_S$, we often use the following pullback diagrams in $\mathrm{Sch}_S$
\begin{equation}
	\begin{tikzcd}
		X\arrow[r,equal]\arrow[d,"\delta_f"]\arrow[dr,phantom,very near start,"\lrcorner"]&X\arrow[d,"\delta_h"]\\
		X\times_YX\arrow[r,"i_f"]\arrow[d,"p_f"]\arrow[dr,phantom,very near start,"\lrcorner"]&X\times_SX\arrow[d,"f\times f"]\\
		Y\arrow[r,"\delta_g"]&Y\times_SY.
	\end{tikzcd}
\end{equation}

\subsection{}
Let $(h:X\to S)\in\mathrm{Sch}_S$, $\mathcal{F}\in\mathcal{D}(X)$. We have coevaluation map in $\mathrm{CohCorr}_S$
\begin{equation}
	S\xrightarrow{\mathrm{coev}_{\mathcal{F}}}\mathcal{H}om_S(\mathcal{F},\mathcal{F}),
\end{equation}
which lies over $[(\delta_h)_!h^*]:S\to X\times_SX$, and hence factors through the cocartesian edge over $[h^*]$:
\begin{equation}
	\begin{tikzcd}
		S\arrow[r,"{[h^*]}",two heads]\arrow[dr,"{\mathrm{coev}_{\mathcal{F}}}"swap]&X\arrow[d,dashed,"{\mathrm{coev}_{\mathcal{F},X/S}}"]\\
		&\mathcal{H}om_S(\mathcal{F},\mathcal{F}).
	\end{tikzcd}
\end{equation}

Similarly, we have evaluation map in $\mathrm{CohCorr}_S$
\begin{equation}
	\mathcal{F}\otimes_S\mathcal{H}om_S(\mathcal{F},S)\to S,
\end{equation}
which lies over $[h_!(\delta_h)^*]:X\times_SX\to S$, and hence factors through the cartesian edge over $[h_!]$:
\begin{equation}
	\begin{tikzcd}
		\mathcal{F}\otimes_SD(\mathcal{F}/S)\arrow[r,dashed,"{\mathrm{ev}_{\mathcal{F},X/S}}"]\arrow[dr,"{\mathrm{ev}_{\mathcal{F}}}"swap]&D(X/S)\arrow[d,hook,"{[h_!]}"]\\
		&S.
	\end{tikzcd}
\end{equation}

\subsection{}
Let $(h:X\to S)\in\mathrm{Sch}_S$, $\mathcal{F}\in\mathcal{D}(X)$.
By \cite{LZ22}, for constructible $\mathcal{F}$ of finite Tor-amplitude, $\mathcal{F}$ is universally locally acyclic over $S$ if and only if $(X;\mathcal{F})$ is dualizable in $\mathrm{CohCorr}_S$, if and only if the following canonical isomorphism in $\mathrm{CohCorr}_S$ is an isomorphism
\begin{equation}\label{eq: dualizable in CohCorr}
	\mathcal{F}\otimes_SD(\mathcal{F}/S)\xrightarrow{=}\mathcal{H}om_S(\mathcal{F},\mathcal{F}).
\end{equation}

For $\mathcal{F}\in\mathrm{CohCorr}_S$ dualizable, the relative cohomological characteristic class is given by the categorical trace
\begin{equation}
	S\xrightarrow{\mathrm{coev}_{\mathcal{F}}}\mathcal{H}om_S(\mathcal{F},\mathcal{F})=\mathcal{F}\otimes_SD(\mathcal{F}/S)\xrightarrow{\mathrm{ev}_{\mathcal{F}}}S.
\end{equation}

More precisely, the composition
\begin{equation}\label{eq: coh char class}
	C_{X/S}(\mathcal{F}):X\xrightarrow{\mathrm{coev}_{\mathcal{F},X/S}}\mathcal{H}om_S(\mathcal{F},\mathcal{F})=\mathcal{F}\otimes_SD(\mathcal{F}/S)\xrightarrow{\mathrm{ev}_{\mathcal{F},X/S}}D(X/S)
\end{equation}
lies in the fiber $\mathcal{D}(X)=\mathrm{CohCorr}_S\times_{\mathrm{Corr}_S}\{X\}$, and is equal to the class $C_{X/S}(\mathcal{F})\in H^0(X,\mathcal{K}_{X/S})$.

\subsection{}
Let $g:Y\to S$ in $\mathrm{Sch}_S$, $\mathcal{F}\in\mathrm{CohCorr}_Y$. We have natural comparison maps
\begin{align}
	\label{eq: from D_F/Y to D_F/S}&D(\mathcal{F}/Y)\otimes_YD(Y/S)\to D(\mathcal{F}/S),\\
	\label{eq: from D_F/S to D_F/Y}&D(\mathcal{F}/S)\otimes_YD(Y/Y\times_SY)\to D(\mathcal{F}/Y),
\end{align}
where \eqref{eq: from D_F/Y to D_F/S} is the natural map
\begin{equation}
	\mathcal{H}om_Y(\mathcal{F},Y)\otimes_YD(Y/S)\to\mathcal{H}om_Y(\mathcal{F},D(Y/S))=D(\mathcal{F}/S),
\end{equation}
and \eqref{eq: from D_F/S to D_F/Y} is given by the following composition
\begin{equation}
	\begin{aligned}
		&D(\mathcal{F}/S)\otimes_YD(Y/Y\times_SY)\\
		=\,&(Y\otimes_SD(\mathcal{F}/S))\otimes_{Y\times_SY}D(Y/Y\times_SY)\\
		\to&\mathcal{H}om_S(\mathcal{F},Y)\otimes_{Y\times_SY}D(Y/Y\times_SY)\\
		\to&\mathcal{H}om_{Y\times_SY}(Y,\mathcal{H}om_S(\mathcal{F},Y))\\
		=\,&\mathcal{H}om_Y(\mathcal{F},Y)=D(\mathcal{F}/Y).
	\end{aligned}
\end{equation}

\subsection{}\label{subsec: from D_F/S to D_F/Y expansion}
Unwinding the definitions, \eqref{eq: from D_F/S to D_F/Y} is obtained by adjunction from the following composition
\begin{equation}
	\begin{aligned}
		&\mathcal{F}\otimes_YD(\mathcal{F}/S)\otimes_YD(Y/Y\times_SY)\\
		=\,\ &\mathcal{F}\otimes_SD(\mathcal{F}/S)\otimes_{Y\times_SY}Y\otimes_YD(Y/Y\times_SY)\\
		\xrightarrow{\mathrm{ev}_Y}&\mathcal{F}\otimes_SD(\mathcal{F}/S)\otimes_Y(Y\times_SY)\\
		=\,\ &\mathcal{F}\otimes_SD(\mathcal{F}/S)\otimes_SY\\
		\xrightarrow{\mathrm{ev}_{\mathcal{F}}}&S\otimes_SY=Y.
	\end{aligned}
\end{equation}

\subsection{}
Consider the special case $\mathcal{F}=Y$. In this case, \eqref{eq: from D_F/Y to D_F/S} is the identity map, and \eqref{eq: from D_F/S to D_F/Y} recovers the bi-evaluation map \cite[(2.26)]{YZ25}
\begin{equation}\label{eq: bi-ev Y/S}
	D(Y/S)\otimes_YD(Y/Y\times_SY)\to Y.
\end{equation}

\begin{lemma}\label{lem: recover D_F/S to D_F/Y from bi-ev}
	We have a natural diagram
	\begin{equation}
		\begin{tikzcd}
			D(\mathcal{F}/S)\otimes_YD(Y/Y\times_SY)\arrow[r,"\eqref{eq: from D_F/S to D_F/Y}"]\arrow[d,equal]&D(\mathcal{F}/Y)\arrow[dd,equal]\\
			\mathcal{H}om_Y(\mathcal{F},D(Y/S))\otimes_YD(Y/Y\times_SY)\arrow[d]\\
			\mathcal{H}om_Y(\mathcal{F},D(Y/S)\otimes_YD(Y/Y\times_SY))\arrow[r,"\eqref{eq: bi-ev Y/S}"]&\mathcal{H}om_Y(\mathcal{F},Y).
		\end{tikzcd}
	\end{equation}
	
	In particular, \eqref{eq: from D_F/S to D_F/Y} recovers \cite[(2.32)]{YZ25}.
\end{lemma}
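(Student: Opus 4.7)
The plan is to reduce the diagram to a comparison of two maps with target $Y$ via the adjunction $(\mathcal{F}\otimes_Y -)\dashv\mathcal{H}om_Y(\mathcal{F},-)$. Both arrows in the claimed commutative square land in $D(\mathcal{F}/Y)=\mathcal{H}om_Y(\mathcal{F},Y)$, so by the universal property of the internal Hom it suffices to check that, after tensoring with $\mathcal{F}$ on the left and post-composing with the counit $\mathrm{ev}:\mathcal{F}\otimes_Y\mathcal{H}om_Y(\mathcal{F},Y)\to Y$, the two resulting maps
\begin{equation*}
\mathcal{F}\otimes_Y D(\mathcal{F}/S)\otimes_Y D(Y/Y\times_SY)\longrightarrow Y
\end{equation*}
coincide.

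For the top arrow \eqref{eq: from D_F/S to D_F/Y}, its transpose is already spelled out in \ref{subsec: from D_F/S to D_F/Y expansion}: using the covariant-transport identification $\otimes_Y = \otimes_S\otimes_{Y\times_SY}Y$ from \ref{subsec: cov trans of BivCohCorr to Corr}, it becomes $\mathrm{ev}_Y$ over $Y\times_SY$ followed by $\mathrm{ev}_{\mathcal{F}}$ over $S$. For the bottom path, the assembly arrow $\mathcal{H}om_Y(\mathcal{F},D(Y/S))\otimes_Y D(Y/Y\times_SY)\to\mathcal{H}om_Y(\mathcal{F},D(Y/S)\otimes_Y D(Y/Y\times_SY))$ transposes by definition to $\mathrm{ev}\otimes\mathrm{id}$, and the arrow $\mathcal{H}om_Y(\mathcal{F},\eqref{eq: bi-ev Y/S})$ transposes to \eqref{eq: bi-ev Y/S} itself, so the transposed bottom path is $\eqref{eq: bi-ev Y/S}\circ(\mathrm{ev}\otimes\mathrm{id})$.

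To finish, I would specialize \ref{subsec: from D_F/S to D_F/Y expansion} to the case $\mathcal{F}=Y$, which exhibits \eqref{eq: bi-ev Y/S} itself as the composition of $\mathrm{ev}_Y$ (over $Y\times_SY$) with the self-duality evaluation of $Y$. Substituting this into the transposed bottom composition, the initial $\mathrm{ev}\otimes\mathrm{id}$ telescopes against the canonical identification $D(\mathcal{F}/S)=\mathcal{H}om_Y(\mathcal{F},D(Y/S))$ of \ref{subsec: CohCorr} (which is itself obtained by adjunction) to reproduce exactly the $\mathrm{ev}_{\mathcal{F}}$ step of the top path, and the remaining $\mathrm{ev}_Y$ step matches tautologically. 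The main obstacle is not conceptual but notational: keeping track of the three tensor products $\otimes_S$, $\otimes_Y$, $\otimes_{Y\times_SY}$ and the corresponding cocartesian and cartesian edges of $\mathrm{BivCohCorr}_S\to\mathrm{Corr}_S$ in a coherent way. Performing the entire verification inside $\mathrm{BivCohCorr}_S$, rather than fiberwise, makes this bookkeeping essentially automatic via the identifications of \ref{subsec: cov trans of BivCohCorr to Corr}.
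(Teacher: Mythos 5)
Your proposal is correct and takes essentially the same route as the paper: transpose both paths through the adjunction $(\mathcal{F}\otimes_Y-)\dashv\mathcal{H}om_Y(\mathcal{F},-)$, invoke \ref{subsec: from D_F/S to D_F/Y expansion}, and reduce everything to the factorization of $\mathrm{ev}_{\mathcal{F}}$ through $Y\otimes_SD(Y/S)\xrightarrow{\mathrm{ev}_Y}S$ via the identification $D(\mathcal{F}/S)=\mathcal{H}om_Y(\mathcal{F},D(Y/S))$. Your ``telescoping'' step is precisely the content of the single displayed diagram in the paper's proof.
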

\begin{proof}
	By adjunction and \ref{subsec: from D_F/S to D_F/Y expansion}, it suffices to note that $\mathrm{ev}_{\mathcal{F}}:\mathcal{F}\otimes_SD(\mathcal{F}/S)\to S$ factors through $Y\otimes_SD(Y/S)$, which is obtained by the following diagram:
	\begin{equation}
		\begin{tikzcd}
			\mathcal{F}\otimes_SD(\mathcal{F}/S)\arrow[d,"\mathrm{ev}_{\mathcal{F},Y/S}"swap]\arrow[dr,dashed]\arrow[rr,"\mathrm{ev}_{\mathcal{F}}"]&&S\\
			D(Y/S)\arrow[r,two heads,"{[\mathrm{pr}_{g,2}^*]}"swap]&Y\otimes_SD(Y/S)\arrow[r,two heads,"{[\delta_g^*]}"swap]\arrow[ur,"\mathrm{ev}_{Y}",near start]&D(Y/S)\arrow[u,hook,"{[g_!]}"swap].
		\end{tikzcd}
	\end{equation}
	
	Here we use the following standard notations
	\begin{equation}
		\begin{tikzcd}
			Y\arrow[r,"\delta_g",hook]\arrow[dr,"\mathrm{id}"swap]&Y\times_SY\arrow[dr,phantom,"\lrcorner",very near start]\arrow[d,"\mathrm{pr}_{g,2}"swap]\arrow[r,"\mathrm{pr}_{g,1}"]&Y\arrow[d,"g"]\\
			&Y\arrow[r,"g"]&S.
		\end{tikzcd}
	\end{equation}
\end{proof}

\begin{proposition}\label{prop: compatible D_F/Y and D_F/S}
	$\forall\mathcal{F}\in\mathrm{CohCorr}_Y$, we have natural diagrams
	\begin{equation}\label{eq: D_F/Y to D_F/S to D_F/Y}
		\begin{tikzcd}
			D(\mathcal{F}/Y)\otimes_YD(Y/S)\otimes_YD(Y/Y\times_SY)\arrow[r,"\eqref{eq: from D_F/Y to D_F/S}"]\arrow[d,"\eqref{eq: bi-ev Y/S}"]&D(\mathcal{F}/S)\otimes_YD(Y/Y\times_SY)\arrow[d,"\eqref{eq: from D_F/S to D_F/Y}"]\\
			D(\mathcal{F}/Y)\otimes_YY\arrow[r,equal]&D(\mathcal{F}/Y),
		\end{tikzcd}
	\end{equation}
	\begin{equation}\label{eq: D_F/S to D_F/Y to D_F/S}
		\begin{tikzcd}
			D(\mathcal{F}/S)\otimes_YD(Y/Y\times_SY)\otimes_YD(Y/S)\arrow[r,"\eqref{eq: from D_F/S to D_F/Y}"]\arrow[d,"\eqref{eq: bi-ev Y/S}"]&D(\mathcal{F}/Y)\otimes_YD(Y/S)\arrow[d,"\eqref{eq: from D_F/Y to D_F/S}"]\\
			D(\mathcal{F}/S)\otimes_YY\arrow[r,equal]&D(\mathcal{F}/S).
		\end{tikzcd}
	\end{equation}
\end{proposition}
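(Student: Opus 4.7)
The approach is to use Lemma \ref{lem: recover D_F/S to D_F/Y from bi-ev} to express \eqref{eq: from D_F/S to D_F/Y} as $\mathcal{H}om_Y(\mathcal{F},-)$ applied to the bi-evaluation $\mathrm{biev}$ from \eqref{eq: bi-ev Y/S}, precomposed with the canonical strength map
\begin{equation*}
\phi_{A,B,C}\colon \mathcal{H}om_Y(A,B)\otimes_Y C\to \mathcal{H}om_Y(A, B\otimes_Y C)
\end{equation*}
of the symmetric monoidal closed category $\mathrm{CohCorr}_Y$. By construction, \eqref{eq: from D_F/Y to D_F/S} is exactly the instance $\phi_{\mathcal{F}, Y, D(Y/S)}$ composed with the unitor $Y\otimes_Y D(Y/S) = D(Y/S)$. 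With these substitutions in hand, both diagrams become coherence statements built from $\phi$, the unitors, and $\mathrm{biev}$.

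For diagram \eqref{eq: D_F/Y to D_F/S to D_F/Y}, I first substitute the lemma into the right arrow, then use the associativity coherence $\phi_{\mathcal{F}, B\otimes C_1, C_2}\circ (\phi_{\mathcal{F}, B, C_1}\otimes \mathrm{id}) = \phi_{\mathcal{F}, B, C_1\otimes_Y C_2}$ (with $B = Y$ and the unitor) to collapse the top-right composite into $\mathcal{H}om_Y(\mathcal{F}, \mathrm{biev})\circ \phi_{\mathcal{F}, Y, A}$, where $A = D(Y/S)\otimes_Y D(Y/Y\times_S Y)$. Naturality of $\phi$ in the $C$-variable applied to $\mathrm{biev}\colon A\to Y$, combined with the fact that $\phi_{\mathcal{F}, Y, Y}$ is the unitor isomorphism, identifies this with $(\mathrm{id}_{D(\mathcal{F}/Y)}\otimes_Y \mathrm{biev})$ followed by the unitor $D(\mathcal{F}/Y)\otimes_Y Y = D(\mathcal{F}/Y)$, which is precisely the left-bottom composite.

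For diagram \eqref{eq: D_F/S to D_F/Y to D_F/S}, a parallel argument using naturality of $\phi$ in the $B$-variable (to commute $\mathcal{H}om_Y(\mathcal{F}, \mathrm{biev})\otimes \mathrm{id}_{D(Y/S)}$ past $\phi_{\mathcal{F}, Y, D(Y/S)}$) and then in the $C$-variable reduces both composites to the form $\mathcal{H}om_Y(\mathcal{F}, \xi)\circ \phi_{\mathcal{F}, D(Y/S), D(Y/Y\times_S Y)\otimes_Y D(Y/S)}$, where $\xi = \mathrm{biev}\otimes_Y \mathrm{id}_{D(Y/S)}$ for the top-right composite and $\xi = \mathrm{id}_{D(Y/S)}\otimes_Y \mathrm{biev}$ (with the braiding absorbed into the interpretation of \eqref{eq: bi-ev Y/S} on $D(Y/Y\times_S Y)\otimes_Y D(Y/S)$) for the bottom-left composite. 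The commutativity of the diagram thus reduces to a triangle-type identity $\mathrm{biev}\otimes_Y \mathrm{id}_{D(Y/S)} = \mathrm{id}_{D(Y/S)}\otimes_Y \mathrm{biev}$ as maps $D(Y/S)\otimes_Y D(Y/Y\times_S Y)\otimes_Y D(Y/S)\to D(Y/S)$. I expect this last step to be the main obstacle, as the identity is not a formal consequence of abstract duality. The plan is to verify it by unwinding \eqref{eq: bi-ev Y/S} through the adjoint description in \ref{subsec: from D_F/S to D_F/Y expansion} specialized to $\mathcal{F} = D(Y/S)$, which should translate the identity into a standard compatibility between the pullback along the diagonal $\delta_g\colon Y\to Y\times_S Y$ and the evaluation maps of the six-functor formalism.
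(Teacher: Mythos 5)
Your proposal follows essentially the same route as the paper: both reduce the two squares, via Lemma \ref{lem: recover D_F/S to D_F/Y from bi-ev}, to coherence and naturality of the strength map $\mathcal{H}om_Y(\mathcal{F},B)\otimes_Y C\to\mathcal{H}om_Y(\mathcal{F},B\otimes_Y C)$ together with a final self-compatibility of \eqref{eq: bi-ev Y/S} on $D(Y/S)\otimes_YD(Y/Y\times_SY)\otimes_YD(Y/S)$. The ``triangle-type identity'' you flag as the main obstacle is precisely the lower square of the paper's second displayed diagram, which the paper asserts inside $\mathcal{H}om_Y(\mathcal{F},-)$ without further argument; your plan to check it by unwinding through the adjoint description of \ref{subsec: from D_F/S to D_F/Y expansion} is the right one, except that the specialization recovering \eqref{eq: bi-ev Y/S} is $\mathcal{F}=Y$, not $\mathcal{F}=D(Y/S)$.
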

\begin{proof}
	By \ref{lem: recover D_F/S to D_F/Y from bi-ev}, \eqref{eq: D_F/Y to D_F/S to D_F/Y} can be obtained from the following diagram
	\begin{equation}
		\begin{small}
			\begin{tikzcd}
				\mathcal{H}om_Y(\mathcal{F},Y)\otimes_YD(Y/S)\otimes_YD(Y/Y\times_SY)\arrow[r,"\eqref{eq: from D_F/Y to D_F/S}"]\arrow[d,equal]&\mathcal{H}om_Y(\mathcal{F},D(Y/S))\otimes_YD(Y/Y\times_SY)\arrow[d]\\
				\mathcal{H}om_Y(\mathcal{F},Y)\otimes_YD(Y/S)\otimes_YD(Y/Y\times_SY)\arrow[r]\arrow[d,"\eqref{eq: bi-ev Y/S}"]&\mathcal{H}om_Y(\mathcal{F},D(Y/S)\otimes_YD(Y/Y\times_SY))\arrow[d,"\eqref{eq: bi-ev Y/S}"]\\
				\mathcal{H}om_Y(\mathcal{F},Y)\otimes_YY\arrow[r,"="]&\mathcal{H}om_Y(\mathcal{F},Y).
			\end{tikzcd}
		\end{small}
	\end{equation}
	
	Also by lemma \ref{lem: recover D_F/S to D_F/Y from bi-ev}, \eqref{eq: D_F/S to D_F/Y to D_F/S} can be obtained from the following diagram
	\begin{equation}
		\begin{tikzcd}
			\mathcal{H}om_Y(\mathcal{F},D(Y/S)\otimes_YD(Y/Y\times_SY))\otimes_YD(Y/S)\arrow[d]\arrow[r,"\eqref{eq: bi-ev Y/S}"]&D(\mathcal{F}/Y)\otimes_YD(Y/S)\arrow[d]\\
			\mathcal{H}om_Y(\mathcal{F},D(Y/S)\otimes_YD(Y/Y\times_SY)\otimes_YD(Y/S))\arrow[r,"\eqref{eq: bi-ev Y/S}"]\arrow[d,"\eqref{eq: bi-ev Y/S}"]&\mathcal{H}om_Y(\mathcal{F},Y\otimes_YD(Y/S))\arrow[d,equal]\\
			\mathcal{H}om_Y(\mathcal{F},D(Y/S)\otimes_YY)\arrow[r,equal]&D(\mathcal{F}/Y).
		\end{tikzcd}
	\end{equation}
\end{proof}

\begin{assumption}\label{assum: smooth assumption}
	In the rest of this paper, we always assume that $D(Y/S)\in\mathrm{CohCorr}_Y$ is a $\otimes_Y$-invertible object, and \eqref{eq: bi-ev Y/S} is an isomorphism. This is satisfied for example when $g:Y\to S$ is cohomologically smooth. Note that by \ref{prop: compatible D_F/Y and D_F/S}, \eqref{eq: from D_F/Y to D_F/S} and \eqref{eq: from D_F/S to D_F/Y} are isomorphisms in this case.
\end{assumption}

\begin{lemma}
	For $\mathcal{F},\mathcal{G}\in\mathrm{CohCorr}_Y$, we have a natural diagram
	\begin{equation}\label{eq: compatible dualizability}
		\begin{tikzcd}
			\mathcal{F}\otimes_YD(\mathcal{G}/Y)\arrow[r]\arrow[d]&\mathcal{H}om_Y(\mathcal{G},\mathcal{F})\arrow[d,equal]\\
			\mathcal{H}om_{Y\times_SY}(Y,\mathcal{F}\otimes_SD(\mathcal{G}/S))\arrow[r]&\mathcal{H}om_{Y\times_SY}(Y,\mathcal{H}om_S(\mathcal{G},\mathcal{F})),
		\end{tikzcd}
	\end{equation}
	where the left map is given by the following composition
	\begin{equation}
		\begin{aligned}
			&\mathcal{F}\otimes_YD(\mathcal{G}/Y)\\
			\xlongequal{\eqref{eq: from D_F/S to D_F/Y}}&\mathcal{F}\otimes_YD(\mathcal{G}/S)\otimes_YD(Y/Y\times_SY)\\
			\xlongequal{\quad\quad\ \,}&\mathcal{F}\otimes_SD(\mathcal{G}/S)\otimes_{Y\times_SY}D(Y/Y\times_SY)\\
			\xrightarrow{\qquad\ \ }&\mathcal{H}om_{Y\times_SY}(Y,\mathcal{F}\otimes_SD(\mathcal{G}/S)).
		\end{aligned}
	\end{equation}
\end{lemma}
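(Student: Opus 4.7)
The plan is to verify the commutativity of \eqref{eq: compatible dualizability} by taking adjoints of both paths under the adjunction $\mathcal{G}\otimes_Y(-)\dashv\mathcal{H}om_Y(\mathcal{G},-)$ (using the identification $\mathcal{H}om_Y(\mathcal{G},\mathcal{F})=\mathcal{H}om_{Y\times_SY}(Y,\mathcal{H}om_S(\mathcal{G},\mathcal{F}))$ on the target), and then matching the two resulting adjoints via the explicit formula for \eqref{eq: from D_F/S to D_F/Y} recorded in \ref{subsec: from D_F/S to D_F/Y expansion}.

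First, I would unfold the top horizontal map: by construction \eqref{eq: c as coh corr}, it is the adjoint of $\mathrm{id}_{\mathcal{F}}\otimes\mathrm{ev}_Y^{\mathcal{G}}\colon\mathcal{F}\otimes_Y\mathcal{G}\otimes_Y D(\mathcal{G}/Y)\to\mathcal{F}$, and the right vertical equality does not alter this adjoint. Next I would unfold the left–bottom composition, which is built out of two instances of \eqref{eq: c as coh corr}: the inner map $c_S\colon\mathcal{F}\otimes_S D(\mathcal{G}/S)\to\mathcal{H}om_S(\mathcal{G},\mathcal{F})$ is adjoint to $\mathrm{id}_{\mathcal{F}}\otimes\mathrm{ev}_S^{\mathcal{G}}$, while the outer map (the instance of \eqref{eq: c as coh corr} in $\mathrm{CohCorr}_{Y\times_SY}$ with $Y$ in place of $\mathcal{G}$, applied to $\mathcal{F}\otimes_S D(\mathcal{G}/S)$) is adjoint to the $\mathrm{ev}_Y^Y$ for $D(Y/Y\times_SY)$. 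Combining these with the identifications of \ref{subsec: cov trans of BivCohCorr to Corr} and the rewriting $D(\mathcal{G}/Y)\simeq D(\mathcal{G}/S)\otimes_Y D(Y/Y\times_SY)$ supplied by \eqref{eq: from D_F/S to D_F/Y}, the adjoint of the left–bottom composition becomes the map $\mathcal{F}\otimes_Y\mathcal{G}\otimes_Y D(\mathcal{G}/Y)\to\mathcal{F}$ obtained by applying $\mathrm{ev}_Y^Y$ on the $Y\otimes_Y D(Y/Y\times_SY)$ factor and then $\mathrm{ev}_S^{\mathcal{G}}$ on the $\mathcal{G}\otimes_S D(\mathcal{G}/S)$ factor. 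Equality of the two adjoints then reduces, after cancelling $\mathrm{id}_{\mathcal{F}}$, to the identity that $\mathrm{ev}_Y^{\mathcal{G}}\colon\mathcal{G}\otimes_Y D(\mathcal{G}/Y)\to Y$ agrees, under the substitution \eqref{eq: from D_F/S to D_F/Y}, with the composite built from $\mathrm{ev}_Y^Y$ and $\mathrm{ev}_S^{\mathcal{G}}$. This is precisely the defining description of \eqref{eq: from D_F/S to D_F/Y} recorded in \ref{subsec: from D_F/S to D_F/Y expansion} (applied with $\mathcal{F}$ there being $\mathcal{G}$ here), so the diagram commutes.

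The main obstacle is not a computation but the organization of adjunctions across different bases: one must coherently pass between the three adjunctions $\mathcal{G}\otimes_Y(-)\dashv\mathcal{H}om_Y(\mathcal{G},-)$, $\mathcal{G}\otimes_S(-)\dashv\mathcal{H}om_S(\mathcal{G},-)$, and $Y\otimes_{Y\times_SY}(-)\dashv\mathcal{H}om_{Y\times_SY}(Y,-)$, together with their mates under the cocartesian transport along $(Y,Y)\twoheadrightarrow Y$ in $\mathrm{Corr}_S$. All of these coherences are built into the symmetric monoidal cocartesian fibration $\mathrm{BivCohCorr}_S\to\mathrm{Corr}_S$ of \ref{def: BivCohCorr}, so once the adjoints are formed inside this ambient framework the remaining verification is a formal diagram chase.
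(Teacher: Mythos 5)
Your proposal is correct and follows essentially the same route as the paper: both arguments pass to adjoints, identify each path of \eqref{eq: compatible dualizability} as built from instances of \eqref{eq: c as coh corr} over the bases $Y$, $S$ and $Y\times_SY$, and reduce the comparison to the expansion of \eqref{eq: from D_F/S to D_F/Y} recorded in \ref{subsec: from D_F/S to D_F/Y expansion}, i.e.\ to the fact that both adjoints equal the composite evaluation $\mathrm{ev}_{\mathcal{G}}\otimes\mathrm{ev}_Y$. The paper merely packages this as an auxiliary rectangular diagram before invoking the same adjunction argument, so there is no substantive difference.
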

\begin{proof}
	It suffices to construct a natural diagram
	\begin{equation}
		\begin{tikzcd}
			\mathcal{F}\otimes_SD(\mathcal{G}/S)\otimes_{Y\times_SY}D(Y/Y\times_SY)\arrow[r,equal]\arrow[d]&\mathcal{F}\otimes_YD(\mathcal{G}/S)\otimes_YD(Y/Y\times_SY)\arrow[d]\\
			\mathcal{H}om_{Y\times_SY}(Y,\mathcal{F}\otimes_SD(\mathcal{G}/S))\arrow[d]&\mathcal{F}\otimes_YD(\mathcal{G}/Y)\arrow[d]\\
			\mathcal{H}om_{Y\times_SY}(Y,\mathcal{H}om_S(\mathcal{G},\mathcal{F}))\arrow[r,equal]&\mathcal{H}om_Y(\mathcal{G},\mathcal{F}).
		\end{tikzcd}
	\end{equation}
	
	By \ref{subsec: from D_F/S to D_F/Y expansion}, both paths are obtained by adjunction from
	\begin{equation}
		\begin{aligned}
			&\mathcal{F}\otimes_S\mathcal{G}\otimes_SD(\mathcal{G}/S)\otimes_{Y\times_SY}Y\otimes_{Y\times_SY}D(Y/Y\times_SY)\\
			\xrightarrow{\mathrm{ev}_{\mathcal{G}}\otimes\mathrm{ev}_{Y}}&\mathcal{F}\otimes_SS\otimes_{Y\times_SY}(Y\times_SY)=Y.
		\end{aligned}
	\end{equation}
\end{proof}

\begin{lemma}\label{lem: compatible coev}
	For $\mathcal{F}$ in $\mathrm{CohCorr}_Y$, we have diagrams
	\begin{equation}\label{eq: compatible coev}
		\begin{tikzcd}
			X\arrow[r,equal]\arrow[d,"\mathrm{coev}_{\mathcal{F},X/Y}"]&\mathcal{H}om_{Y\times_S Y}(Y,X)\arrow[d,"\mathrm{coev}_{\mathcal{F},X/S}"]\\
			\mathcal{H}om_Y(\mathcal{F},\mathcal{F})\arrow[r,equal]&\mathcal{H}om_{Y\times_SY}(Y,\mathcal{H}om_S(\mathcal{F},\mathcal{F})).
		\end{tikzcd}
	\end{equation}
	\begin{equation}\label{eq: compatible ev}
		\begin{tikzcd}
			\mathcal{F}\otimes_YD(\mathcal{F}/S)\otimes_YD(Y/Y\times_SY)\arrow[d,"\mathrm{ev}_{\mathcal{F},X/S}"]\arrow[r,"="]&\mathcal{F}\otimes_YD(\mathcal{F}/Y)\arrow[d,"\mathrm{ev}_{\mathcal{F},X/Y}"]\\
			D(X/S)\otimes_{Y\times_SY}D(Y/Y\times_SY)\arrow[r,"="]&D(X/Y).
		\end{tikzcd}
	\end{equation}
\end{lemma}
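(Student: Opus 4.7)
The plan is to prove both diagrams by transporting them through the appropriate adjunctions and reducing each to an identity. Both the coevaluations and the evaluations are defined by universal properties, so the compatibilities should boil down to showing that the two resulting adjoint maps agree. The natural identifications $\mathcal{H}om_Y(\mathcal{F},\mathcal{G})=\mathcal{H}om_{Y\times_SY}(Y,\mathcal{H}om_S(\mathcal{F},\mathcal{G}))$ and $\mathcal{F}\otimes_Y\mathcal{G}=\mathcal{F}\otimes_S\mathcal{G}\otimes_{Y\times_SY}Y$ recalled right after \ref{def: BivCohCorr} are what make this possible, and I will use them repeatedly.

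For the coevaluation diagram \eqref{eq: compatible coev}, I would apply the tensor-Hom adjunction to both paths $X\to\mathcal{H}om_{Y\times_SY}(Y,\mathcal{H}om_S(\mathcal{F},\mathcal{F}))$. Going along the left and bottom, one lands at the coevaluation over $Y$, which is adjoint to the identity of $\mathcal{F}$ viewed via the identification $\mathcal{F}\otimes_YX=\mathcal{F}$. Going along the top and right, one obtains the coevaluation over $S$ composed with the identification, which is also adjoint to $\mathrm{id}_{\mathcal{F}}$ via $\mathcal{F}\otimes_SX\otimes_{Y\times_SY}Y=\mathcal{F}$. The compatibility of the two identifications with the adjunctions gives that both adjoint maps coincide with $\mathrm{id}_{\mathcal{F}}$, which concludes the argument.

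For the evaluation diagram \eqref{eq: compatible ev}, I would unfold \eqref{eq: from D_F/S to D_F/Y} using \ref{subsec: from D_F/S to D_F/Y expansion}: the top-right path $\mathcal{F}\otimes_YD(\mathcal{F}/S)\otimes_YD(Y/Y\times_SY)\xrightarrow{=}\mathcal{F}\otimes_YD(\mathcal{F}/Y)\xrightarrow{\mathrm{ev}_{\mathcal{F},X/Y}}D(X/Y)$ is by construction the map obtained by adjunction from the composition
\begin{equation*}
\mathcal{F}\otimes_SD(\mathcal{F}/S)\otimes_{Y\times_SY}Y\otimes_YD(Y/Y\times_SY)\xrightarrow{\mathrm{ev}_Y}\mathcal{F}\otimes_SD(\mathcal{F}/S)\otimes_SY\xrightarrow{\mathrm{ev}_{\mathcal{F}}\otimes\mathrm{id}}Y.
\end{equation*}
On the other hand, the bottom-left path goes through $\mathrm{ev}_{\mathcal{F},X/S}$ factored over $D(X/S)$ and then the identification $D(X/S)\otimes_{Y\times_SY}D(Y/Y\times_SY)=D(X/Y)$, which is itself built from \eqref{eq: bi-ev Y/S} applied to $X$ in place of $\mathcal{F}$. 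Using \ref{lem: recover D_F/S to D_F/Y from bi-ev} to express the latter identification as the composition of the factored evaluation with the bi-evaluation, both paths are seen to represent the same map under adjunction.

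The main obstacle is not a conceptual one but a bookkeeping one: in $\mathrm{BivCohCorr}_S$ the various factorizations through cocartesian/cartesian edges over $[h^*]$, $[g^*]$, $[h_!]$, $[g_!]$ must be tracked carefully, and one has to verify that the ``factored'' coev and ev maps used in \eqref{eq: compatible coev} and \eqref{eq: compatible ev} are indeed compatible with the corresponding factorizations over $Y$. Once this is set up, the compatibility diagram for each factorization is a formal consequence of the universal property of cocartesian and cartesian edges in the symmetric monoidal cocartesian fibration $\mathrm{BivCohCorr}_S\to\mathrm{Corr}_S$ from \ref{subsec: BivCohCorr to Corr}, so no substantial computation is needed beyond unwinding the definitions.
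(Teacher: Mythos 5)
Your proposal follows essentially the same route as the paper: both diagrams are transported through the tensor--Hom adjunction, \eqref{eq: compatible coev} reducing to the identity of $\mathcal{F}$ under the identifications $\mathcal{F}\otimes_YX=\mathcal{F}\otimes_SX\otimes_{Y\times_SY}Y$, and \eqref{eq: compatible ev} reducing via the expansion in \ref{subsec: from D_F/S to D_F/Y expansion} to the evident agreement of the two evaluation compositions. The paper's proof is exactly this unwinding (it does not even need to invoke \ref{lem: recover D_F/S to D_F/Y from bi-ev} explicitly), so your plan is correct and matches it.
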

\begin{proof}
	\eqref{eq: compatible coev} is obtained by adjunction from
	\begin{equation}
		\begin{tikzcd}
			\mathcal{F}\otimes_YX\arrow[r,equal]\arrow[dr,two heads,"{[(\delta_f)^*]}"swap]&\mathcal{F}\otimes_SX\otimes_{Y\times_SY}Y\arrow[r,equal]&\mathcal{F}\otimes_SX\arrow[dl,two heads,"{[(\delta_h)^*]}"]\\
			&\mathcal{F}.
		\end{tikzcd}
	\end{equation}
	
	By \ref{subsec: from D_F/S to D_F/Y expansion}, \eqref{eq: compatible ev} is obtained by adjunction from the diagram
	\begin{equation}
		\begin{tikzcd}
			X\otimes_Y\mathcal{F}\otimes_YD(\mathcal{F}/S)\otimes_YD(Y/Y\times_SY)\arrow[d,equal]\arrow[r,equal]&X\otimes_Y\mathcal{F}\otimes_YD(\mathcal{F}/Y)\arrow[d,"\mathrm{ev}"]\\
			X\otimes_Y\mathcal{F}\otimes_SD(\mathcal{F}/S)\otimes_YY\otimes_{Y\times_SY}D(Y/Y\times_SY)\arrow[r,"\mathrm{ev}"]&X.
		\end{tikzcd}
	\end{equation}
\end{proof}

\begin{proposition}\cite[Theorem 3.7]{YZ25}\label{prop: fibration formula in transversal case}
	Assume $\mathcal{F}\in\mathcal{D}(X)$ is dualizable both in $\mathrm{CohCorr}_Y$ and in $\mathrm{CohCorr}_S$, then we have the fibration formula in transversal case
	\begin{equation}
		\begin{tikzcd}
			X\arrow[d,"C_{X/Y}(\mathcal{F})"swap]\arrow[dr,"C_{X/S}(\mathcal{F})"]\\
			D(X/Y)\arrow[r,"\eqref{eq: triangle D_F/Y/S}"swap]&D(X/S).
		\end{tikzcd}
	\end{equation}
\end{proposition}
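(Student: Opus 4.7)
The plan is to unravel the definitions of $C_{X/S}(\mathcal{F})$ and $C_{X/Y}(\mathcal{F})$ given by \eqref{eq: coh char class} and then paste together the two compatibility squares \eqref{eq: compatible coev} and \eqref{eq: compatible ev} of Lemma \ref{lem: compatible coev} with the square \eqref{eq: compatible dualizability}. Since $\mathcal{F}$ is dualizable both over $Y$ and over $S$, \eqref{eq: dualizable in CohCorr} makes the canonical comparison maps $\mathcal{F}\otimes_Y D(\mathcal{F}/Y)\to\mathcal{H}om_Y(\mathcal{F},\mathcal{F})$ and $\mathcal{F}\otimes_S D(\mathcal{F}/S)\to\mathcal{H}om_S(\mathcal{F},\mathcal{F})$ invertible; combined with the transversality assumption \ref{assum: smooth assumption}, every comparison morphism appearing in the pasting is an isomorphism, so it suffices to build one commuting diagram.

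Concretely, I would first use \eqref{eq: compatible coev} to rewrite the coevaluation part of $C_{X/S}(\mathcal{F})$: the map $X\xrightarrow{\mathrm{coev}_{\mathcal{F},X/S}}\mathcal{H}om_S(\mathcal{F},\mathcal{F})$ is identified with $\mathrm{coev}_{\mathcal{F},X/Y}$ after applying $\mathcal{H}om_{Y\times_SY}(Y,-)$. Then I would invoke \eqref{eq: compatible dualizability} to transport this through the dualizability identification $\mathcal{F}\otimes_Y D(\mathcal{F}/Y)=\mathcal{H}om_Y(\mathcal{F},\mathcal{F})$, using the isomorphism $\mathcal{F}\otimes_Y D(\mathcal{F}/Y)=\mathcal{F}\otimes_Y D(\mathcal{F}/S)\otimes_Y D(Y/Y\times_SY)$ supplied by \eqref{eq: from D_F/S to D_F/Y} under \ref{assum: smooth assumption}. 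Finally, \eqref{eq: compatible ev} identifies the composite $\mathcal{F}\otimes_Y D(\mathcal{F}/S)\otimes_Y D(Y/Y\times_SY)\xrightarrow{\mathrm{ev}_{\mathcal{F},X/S}}D(X/S)\otimes_{Y\times_SY}D(Y/Y\times_SY)$ with $\mathrm{ev}_{\mathcal{F},X/Y}$ followed by the identification of the right-hand side with $D(X/Y)$. Projecting away the $D(Y/Y\times_SY)$-factor via the bi-evaluation \eqref{eq: bi-ev Y/S} recovers the original $\mathrm{ev}_{\mathcal{F},X/S}$ on the $C_{X/S}(\mathcal{F})$ side and produces the canonical arrow $D(X/Y)\to D(X/S)$ on the $C_{X/Y}(\mathcal{F})$ side, which is exactly \eqref{eq: triangle D_F/Y/S}.

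The main obstacle is purely bookkeeping: one must simultaneously track the three tensor products $\otimes_Y,\otimes_{Y\times_SY},\otimes_S$ across every identification and verify that the three local squares glue coherently into a single diagram. The coherence statements \ref{lem: recover D_F/S to D_F/Y from bi-ev} and \ref{prop: compatible D_F/Y and D_F/S} already encode the required compatibilities among \eqref{eq: from D_F/Y to D_F/S}, \eqref{eq: from D_F/S to D_F/Y}, and \eqref{eq: bi-ev Y/S}, and the transversality assumption \ref{assum: smooth assumption} guarantees invertibility of all comparison maps, so no further geometric input is needed beyond carefully composing the three compatibility squares.
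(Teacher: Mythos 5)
Your proposal is correct and follows essentially the same route as the paper: the paper's proof is precisely one pasted diagram assembled from the three compatibility squares \eqref{eq: compatible coev}, \eqref{eq: compatible dualizability} and \eqref{eq: compatible ev}, with the right-hand column relating $\mathcal{H}om_{Y\times_SY}(Y,-)$ back to $\mathcal{H}om_S(\mathcal{F},\mathcal{F})$ and $\mathcal{F}\otimes_SD(\mathcal{F}/S)$, and the bottom row being the first map of the triangle \eqref{eq: triangle D_F/Y/S}. The only cosmetic difference is that you phrase the last step as ``projecting away the $D(Y/Y\times_SY)$-factor via \eqref{eq: bi-ev Y/S}'', whereas the paper uses the natural transformation $\bullet\otimes_{Y\times_SY}D(Y/Y\times_SY)\to\mathcal{H}om_{Y\times_SY}(Y,\bullet)$ of \ref{def: triangle functor} directly; these agree under \ref{assum: smooth assumption}.
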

\begin{proof}
	We have the following diagram
	\begin{equation}
		\begin{tikzcd}
			X\arrow[dr,phantom,"\eqref{eq: compatible coev}"]\arrow[r,equal]\arrow[d,"\mathrm{coev}_{\mathcal{F},X/Y}"swap]&\mathcal{H}om_{Y\times_SY}(Y,X)\arrow[d,"\mathrm{coev}_{\mathcal{F},X/S}"]\arrow[r,equal]&X\arrow[d,"\mathrm{coev}_{\mathcal{F},X/S}"]\\
			\mathcal{H}om_Y(\mathcal{F},\mathcal{F})\arrow[r,equal]\arrow[rd,phantom,"\eqref{eq: compatible dualizability}"]&\mathcal{H}om_{Y\times_SY}(Y,\mathcal{H}om_S(\mathcal{F},\mathcal{F}))\arrow[r,hook]&\mathcal{H}om_S(\mathcal{F},\mathcal{F})\\
			\mathcal{F}\otimes_YD(\mathcal{F}/Y)\arrow[dr,phantom,"\eqref{eq: compatible ev}"]\arrow[u,"="]\arrow[d,"\mathrm{ev}_{\mathcal{F},X/Y}"swap]\arrow[r]&\mathcal{H}om_{Y\times_SY}(Y,\mathcal{F}\otimes_SD(\mathcal{F}/S))\arrow[u,"="swap]\arrow[d,"\mathrm{ev}_{\mathcal{F},X/S}"]\arrow[r,hook]&\mathcal{F}\otimes_SD(\mathcal{F}/S)\arrow[u,"="swap]\arrow[d,"\mathrm{ev}_{\mathcal{F},X/S}"]\\
			D(X/Y)\arrow[r]&\mathcal{H}om_{Y\times_SY}(Y,D(X/S))\arrow[r,equal]&D(X/S).
		\end{tikzcd}
	\end{equation}
\end{proof}

\begin{definition}\label{def: triangle D_F/Y/S}
	For $\mathcal{F}\in\mathrm{CohCorr}_Y$, define an exact triangle
	\begin{equation}
		\begin{tikzcd}
			D(\mathcal{F}/Y)\arrow[d,equal]\arrow[r]& D(\mathcal{F}/S)\arrow[d,equal]\arrow[r]&D(\mathcal{F}/Y/S)\arrow[d,equal]\\
			D(\mathcal{F}/S)\otimes_YD(Y/Y\times_SY)\arrow[r]&\mathcal{H}om_{Y\times_SY}(Y,D(\mathcal{F}/S))\arrow[r]&\Delta_{Y/S}D(\mathcal{F}/S).
		\end{tikzcd}
	\end{equation}
\end{definition}

\subsection{}
For $\mathcal{F}=X$, \ref{def: triangle D_F/Y/S} recovers \cite[(4.6)]{YZ25}
\begin{equation}\label{eq: triangle D_F/Y/S}
	\begin{tikzcd}
		D(X/Y)\arrow[d,equal]\arrow[r]&D(X/S)\arrow[d,equal]\arrow[r]&D(X/Y/S)\arrow[d,equal]\\
		(X,\mathcal{K}_{X/Y})\arrow[r,"(\delta_g)^!"]&(X,\mathcal{K}_{X/S})\arrow[r]&(X,\mathcal{K}_{X/Y/S}).
	\end{tikzcd}
\end{equation}

\begin{definition}\label{def: non-localized NA class}
	Assume $(X;\mathcal{F})\in\mathrm{CohCorr}_Y$ is dualizable in $\mathrm{CohCorr}_S$.
	
	By proof of \ref{prop: fibration formula in transversal case}, we have a diagram
	\begin{equation}
		\begin{tikzcd}[column sep=-20,row sep=5]
			&&X\arrow[rd,equal]\arrow[dd,"\mathrm{coev}_{\mathcal{F},X/Y}"]\\
			&&&X\arrow[dd,"\mathrm{coev}_{\mathcal{F},X/S}"]\\
			\mathcal{F}\otimes_YD(\mathcal{F}/Y)\arrow[rr]\arrow[dd,"\mathrm{ev}_{\mathcal{F},X/Y}"swap]\arrow[dr]&&\mathcal{H}om_Y(\mathcal{F},\mathcal{F})\arrow[dr,equal]\\
			&\mathcal{H}om_{Y\times_SY}(Y,\mathcal{F}\otimes_SD(\mathcal{F}/S))\arrow[rr,equal]\arrow[dd,"\mathrm{ev}_{\mathcal{F},X/S}"swap]\arrow[dr]&&\mathcal{H}om_{Y\times_SY}(Y,\mathcal{H}om_S(\mathcal{F},\mathcal{F}))\\
			D(X/Y)\arrow[dr]&&\Delta_{Y/S}(\mathcal{F}\otimes_SD(\mathcal{F}/S))\arrow[dd]\\
			&D(X/S)\arrow[dr]\\
			&&D(X/Y/S).
		\end{tikzcd}
	\end{equation}
	
	This diagram defines a class $C_{X/Y/S}(\mathcal{F})\in H^0(X,\mathcal{K}_{X/Y/S})$, which is called the non-localized non-acyclicity class. Unwinding the definitions, this construction recovers \cite[(4.12)]{YZ25}.
\end{definition}

\begin{lemma}
	For $j:U\hookrightarrow X$ in $\mathrm{Sch}_S$ open immersion, we have natural diagram
	\begin{equation}
		\begin{tikzcd}
			X\arrow[r,two heads,"{[j^*]}"]\arrow[d,"C_{X/Y/S}(\mathcal{F})"swap]&U\arrow[d,"C_{U/Y/S}(\mathcal{F}|_U)"]\\
			D(X/Y/S)\arrow[r,two heads,"{[(j\times j)^*]}"]&D(U/Y/S).
		\end{tikzcd}
	\end{equation}
\end{lemma}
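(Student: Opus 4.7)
The plan is to verify that each map in the defining diagram of $C_{X/Y/S}(\mathcal{F})$ from \ref{def: non-localized NA class} is natural with respect to pullback along $j$, then paste these naturality squares together. Regarding $j$ as a morphism over $Y$ (and hence over $S$), it induces a symmetric monoidal cocartesian morphism in $\mathrm{BivCohCorr}_S$ lifting $[j^*]:X \to U$. Since $j$ is étale, the functor $[j^*]$ is compatible with all six-functor operations and with internal Hom, which is the underlying reason why every step of the construction behaves well under restriction.

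I would break the verification into four pieces, corresponding to the four types of maps appearing in the defining diagram. First, for the coevaluations, the identity $[j^*]\circ\mathrm{coev}_{\mathcal{F},X/S}=\mathrm{coev}_{\mathcal{F}|_U,U/S}\circ[(j\times j)^*]$ follows from the characterization of coev as the factorization of the universal coevaluation through the cocartesian edge, together with the fact that the composition of cocartesian edges $S \twoheadrightarrow X \twoheadrightarrow U$ is again cocartesian; the same argument applies to $\mathrm{coev}_{\mathcal{F},X/Y}$. Second, the dualizability identification $\mathcal{H}om_S(\mathcal{F},\mathcal{F})=\mathcal{F}\otimes_S D(\mathcal{F}/S)$ pulls back to the analogous identification for $\mathcal{F}|_U$, since étale base change preserves the ULA property and hence dualizability in $\mathrm{CohCorr}_S$. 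Third, the functor $\Delta_{Y/S}$ defined in \ref{def: triangle functor} commutes with $[(j\times j)^*]$, because it is a cofiber of a natural transformation of functors of cohomological correspondences and $[(j\times j)^*]$ is exact. Fourth, the identities for $\mathrm{ev}_{\mathcal{F},X/S}$ and $\mathrm{ev}_{\mathcal{F},X/Y}$ follow dually to the coev case, using the cartesian edge factorization of ev instead.

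Pasting these four naturality squares along the defining diagram of $C_{X/Y/S}(\mathcal{F})$ yields the required commutative square relating $C_{X/Y/S}(\mathcal{F})$ and $C_{U/Y/S}(\mathcal{F}|_U)$ via $[j^*]$ and $[(j\times j)^*]$. The main obstacle is purely organizational, since the diagram in \ref{def: non-localized NA class} is three-dimensional with many faces, and one must keep careful track of which faces live in $\mathrm{CohCorr}_Y$ versus $\mathrm{CohCorr}_{Y\times_S Y}$ versus $\mathrm{CohCorr}_S$ as $[j^*]$ is applied. However, once each constituent piece is shown to be natural under pullback along $j$, the conclusion follows by assembling the naturality squares into a commutative cube over the original defining diagram and reading off the composite on the two outer faces.
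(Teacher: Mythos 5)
Your proposal is correct and follows essentially the same route as the paper: the paper's proof is exactly the vertical pasting of the four naturality squares you describe (for $\mathrm{coev}_{\mathcal{F},X/Y}$, the dualizability identification, the map to $\Delta_{Y/S}$ from \ref{def: triangle functor}, and $\mathrm{ev}_{\mathcal{F},X/S}$), with the horizontal edges given by the cocartesian edges over $[j^*]$ and $[(j\times j)^*]$. The paper simply records the resulting ladder diagram without further commentary, so your explicit justifications (composites of cocartesian edges being cocartesian, $\Delta_{Y/S}$ being a cofiber of a natural transformation, preservation of dualizability under restriction to $U$) supply exactly the implicit content.
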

\begin{proof}
	We have the following diagram
	\begin{equation}
		\begin{tikzcd}
			X\arrow[d,"\mathrm{coev}_{\mathcal{F},X/Y}"]\arrow[r,two heads,"{[j^*]}"]&U\arrow[d,"\mathrm{coev}_{\mathcal{F}|_U,U/Y}"]\\
			\mathcal{H}om_Y(\mathcal{F},\mathcal{F})\arrow[r,two heads,"{[(j\times j)^*]}"]\arrow[d,equal]&\mathcal{H}om_Y(\mathcal{F}|_U,\mathcal{F}|_U)\arrow[d,equal]\\
			\mathcal{H}om_{Y\times_SY}(Y,\mathcal{F}\otimes_SD(\mathcal{F}/S))\arrow[r,two heads,"{[(j\times j)^*]}"]\arrow[d,"\ref{def: triangle functor}"]&\mathcal{H}om_{Y\times_SY}(Y,\mathcal{F}|_U\otimes_SD(\mathcal{F}|_U/S))\arrow[d,"\ref{def: triangle functor}"]\\
			\Delta_{Y/S}(\mathcal{F}\otimes_SD(\mathcal{F}/S))\arrow[r,two heads,"{[(j\times j)^*]}"]\arrow[d,"\mathrm{ev}_{\mathcal{F},X/S}"]&\Delta_{Y/S}(\mathcal{F}|_U\otimes_SD(\mathcal{F}|_U,S))\arrow[d,"\mathrm{ev}_{\mathcal{F}|_U,U/S}"]\\
			D(X/Y/S)\arrow[r,two heads,"{[j^*]}"]&D(U/Y/S).
		\end{tikzcd}
	\end{equation}
\end{proof}

\begin{definition}\label{def: NA class}
	Let $j:U\hookrightarrow X$ be an open immersion with closed complement $i:Z\hookrightarrow X$. Assume that $(X;\mathcal{F})\in\mathrm{CohCorr}_Y$ is dualizable in $\mathrm{CohCorr}_S$, and $\mathcal{F}|_U$ is dualizable in $\mathrm{CohCorr}_Y$.
	
	Let $D_Z(X/Y/S)=i_*i^!D(X/Y/S)$.
	By \eqref{eq: compatible dualizability}, $\Delta_{Y/S}(\mathcal{F}|_U\otimes_S\mathcal{H}om_S(\mathcal{F}|_U,S))=0$.
	
	By excision, we have a diagram
	\begin{equation}
		\begin{small}
			\begin{tikzcd}
				&X\arrow[d]\arrow[r]&j_*U\arrow[d]\\
				i_*i^!\Delta_{Y/S}(\mathcal{F}\otimes_SD(\mathcal{F}/S))\arrow[r,equal]\arrow[d]&\Delta_{Y/S}(\mathcal{F}\otimes_SD(\mathcal{F}/S))\arrow[r]\arrow[d]&j_*\Delta_{Y/S}(\mathcal{F}|_U\otimes_SD(\mathcal{F}|_U,S))=0\arrow[d]\\
				D_Z(X/Y/S)\arrow[r]&D(X/Y/S)\arrow[r]&j_*D(U/Y/S).
			\end{tikzcd}
		\end{small}
	\end{equation}
	
	This diagram defines a class $C_{X/Y/S}^Z(\mathcal{F})\in H^0_Z(X,\mathcal{K}_{X/Y/S})$, which recovers \cite[(4.14)]{YZ25}.
\end{definition}

\section{Exact nine-diagrams}\label{sec: Nine-diagram}
In this section, we study a special kind of diagrams called nine-diagrams. This will play an important role in our proof of the additivity theorem.

\subsection{}\label{subsec: exact triangle}
Let $\mathcal{C}$ be a stable category with zero object $0$.
\begin{enumerate}[label=(\alph*)]
	\item A square in $\mathcal{C}$ is a functor $M:\square\to\mathcal{C}$.
	\begin{equation}\label{eq: square}
		M:\begin{tikzcd}
			M(0,0)\arrow[r]\arrow[d]&M(0,1)\arrow[d]\\
			M(1,0)\arrow[r]&M(1,1).
		\end{tikzcd}
	\end{equation}
	\item $M$ is an exact square if it is both a pullback and a pushout square.
	\item $M$ is a triangle if $M(1,0)=0$. For simplicity, we denote the triangle
	\begin{equation}\label{eq: triangle}
		M:\begin{tikzcd}
			X\arrow[r]\arrow[d]&Y\arrow[d]\\
			0\arrow[r]&Z
		\end{tikzcd}
	\end{equation}
	by $(X\to Y\to Z)$ if no confusion will be caused.
	\item An exact triangle is a triangle which is exact as a square. In the derived category $\mathcal{D}(\mathcal{A})$ of an abelian category $\mathcal{A}$, exact triangles are given by distinguished triangles.
\end{enumerate} 

Let $\mathrm{ExTri}(\mathcal{C}),\mathrm{ExSq}(\mathcal{C}),\mathrm{Tri}(\mathcal{C})\subset\mathrm{Sq}(\mathcal{C})=\mathrm{Fun}(\square,\mathcal{C})$ be full subcategories spanned by exact triangles, exact squares and triangles respectively. Note they are all stable by \cite[1.1.3.2]{HA}.

The following lemma should be well-known to experts. We include a proof for the sake of completeness.

\begin{lemma}\label{lem: FD functor}
	There is a natural functor
	\begin{equation}
		\mathrm{FD}:\mathrm{ExSq}(\mathcal{C})\to\mathrm{ExTri}(\mathcal{C}),
	\end{equation}
	
	sending an exact square \eqref{eq: square} to
	\begin{equation}\label{eq: folded triangle}
		\begin{tikzcd}
			X\arrow[dr,phantom,"\square"]\arrow[r,"{(f,g)}"]\arrow[d]&Y\oplus Z\arrow[d,"{(p,-q)}"]\\
			0\arrow[r]&W.
		\end{tikzcd}
	\end{equation}
\end{lemma}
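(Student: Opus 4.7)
The plan is to realize $\mathrm{FD}$ via the standard $\infty$-categorical dictionary identifying bicartesian squares in a stable $\infty$-category with three-term cofiber sequences whose middle term is a biproduct.

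First I would describe the output on objects. Given an exact square $M$ as in \eqref{eq: square}, the pair $f:X\to Y$ and $g:X\to Z$ assembles via the universal property of the product $Y\oplus Z=Y\times Z$ in the pointed category $\mathcal{C}$ into a morphism $(f,g):X\to Y\oplus Z$; dually, $p:Y\to W$ together with the negation $-q:Z\to W$ (obtained from the canonical automorphism of $Z$ induced by the shift functor on $\mathcal{C}$) assembles via the universal property of the coproduct into a morphism $(p,-q):Y\oplus Z\to W$. The commutativity datum of $M$ (i.e.\ the homotopy $pf\simeq qg$) provides a nullhomotopy of $(p,-q)\circ (f,g)\simeq pf-qg$, giving a triangle \eqref{eq: folded triangle}. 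Exactness of the output when $M$ is exact reduces to the standard identification of the pushout $Y\sqcup_X Z$ in a stable $\infty$-category with the cofiber of $(f,-g):X\to Y\oplus Z$, equivalently with the cofiber of $(f,g)$ twisted by negation on the second summand; this is essentially \cite[\S1.1.3]{HA}.

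Second, to upgrade this to an honest functor of $\infty$-categories, I would avoid manually tracking signs and instead define $\mathrm{FD}$ by a universal diagrammatic construction. Concretely, I would introduce an auxiliary indexing category $K$ containing $\square$ with an additional vertex for the biproduct and the requisite arrows (including the canonical negation on $Z$ coming from the stable structure), and define $\mathrm{FD}$ as the composite of left Kan extension along $\square\hookrightarrow K$ with restriction back to the $\square$-subdiagram of folded-triangle shape. This is manifestly functorial; restricted to $\mathrm{ExSq}(\mathcal{C})$, the Kan extension produces the biproduct as the middle vertex (since $\mathcal{C}$ has finite biproducts), and the output is an exact triangle by the object-level analysis above. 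A leaner alternative is to define $\mathrm{FD}(M)$ directly as the cofiber sequence $X\xrightarrow{(f,g)} Y\oplus Z\to \mathrm{cofib}$ and then identify $\mathrm{cofib}$ with $W$ using the pushout property of $M$; here the sign emerges naturally as the comparison between $\mathrm{cofib}$ and the pushout.

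The main obstacle I anticipate is making the sign in $(p,-q)$ coherent at the $\infty$-categorical level: negating a morphism is not a primitive operation but must arise from the shift. The cleanest resolution is to work throughout with the automorphism $\mathrm{id}_Y\oplus(-\mathrm{id}_Z)$ of $Y\oplus Z$ built in to the stable structure of $\mathcal{C}$ (rather than attempting to introduce signs by hand), which amounts to appealing to the equivalence between exact squares and biproduct-cofiber sequences provided by the stable $\infty$-category formalism. Once this identification is in place, the rest is formal.
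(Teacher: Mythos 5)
Your proposal is correct and, especially in its ``leaner alternative'' form, is essentially the paper's own proof: the paper defines $\mathrm{FD}$ as the composite $\mathrm{ExSq}(\mathcal{C})\simeq\mathrm{Fun}([1]\coprod_{\{0\}}[1],\mathcal{C})\to\mathrm{Fun}([1],\mathcal{C})\simeq\mathrm{ExTri}(\mathcal{C})$ (both identifications via Kan extension, \cite[4.3.2.15]{HTT}, so no signs are tracked by hand) and then identifies $\mathrm{cofib}(f,g)$ with $W$ by commuting the pushout with cofibers. Your treatment of the sign also matches the paper's accompanying remark, which attributes the minus sign to the transposition square exhibiting $X$ as $\mathrm{cofib}(X[-1]\to 0)$.
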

\begin{proof}
	The functor is given by the following composition
	\begin{equation}
		\mathrm{FD}:\mathrm{ExSq}(\mathcal{C})=\mathrm{Fun}([1]\coprod_{\{0\}}[1],\mathcal{C})\to\mathrm{Fun}([1],\mathcal{C})=\mathrm{ExTri}(\mathcal{C}),
	\end{equation}
	where the equivalences follow from \cite[4.3.2.15]{HTT}, and the middle functor sends $(Z\xleftarrow{g}X\xrightarrow{f}Y)$ to $(X\xrightarrow{(f,g)}Y\oplus Z)$.
	
	To see that the exact triangle obtained is of the form \eqref{eq: folded triangle}, we use the following calculation:
	\begin{align}
		W&=Y\coprod_ZX\\
		&=\mathrm{cofib}(0\to Y)\coprod_{\mathrm{cofib}(X[-1]\to0)}\mathrm{cofib}(0\to Z)\\
		&=\mathrm{cofib}\left(0\coprod_{X[-1]}0\to Y\coprod_0Z\right)\\
		&=\mathrm{cofib}(X\to Y\oplus Z).
	\end{align}
\end{proof}

\begin{remark}\label{rmk: sign convention}
	The minus sign in \eqref{eq: folded triangle} comes from transposition of the square (c.f. \cite[1.1.2.10]{HA})
	\begin{equation}
		\begin{tikzcd}
			X[-1]\arrow[r]\arrow[d]&0\arrow[d]\\
			0\arrow[r]&X.
		\end{tikzcd}
	\end{equation}
	
	There are several variants of \eqref{eq: folded triangle} with different sign conventions.
\end{remark}

\begin{definition}\label{def: nine-diagram}
	We call $\mathrm{Nine}(\mathcal{C})=\mathrm{Tri}(\mathrm{Tri}(\mathcal{C}))$ the category of nine-diagrams, whose objects are depicted as follows
	\begin{equation}\label{eq: nine-diagram}
		\begin{tikzcd}
			X_{0,0}\arrow[d,"d^v_{0,0}"]\arrow[r,"d^h_{0,0}"]&X_{0,1}\arrow[d,"d^v_{0,1}"]\arrow[r,"d^v_{0,1}"]&X_{0,2}\arrow[d,"d^v_{0,2}"]\\
			X_{1,0}\arrow[d,"d^v_{1,0}"]\arrow[r,"d^h_{1,0}"]&X_{1,1}\arrow[d,"d^v_{1,1}"]\arrow[r,"d^h_{1,1}"]&X_{1,2}\arrow[d,"d^v_{1,2}"]\\
			X_{2,0}\arrow[r,"d^h_{2,0}"]&X_{2,1}\arrow[r,"d^h_{2,1}"]&X_{2,2}.
		\end{tikzcd}
	\end{equation}
	
	Let $\mathrm{ExNine}(\mathcal{C})=\mathrm{ExTri}(\mathrm{ExTri}(\mathcal{C}))\subset\mathrm{Nine}(\mathcal{C})$ the full subcategory of exact nine-diagrams.
\end{definition}

\subsection{}\label{subsec: half nine-diagram}
Let $\mathrm{LowNine}(\mathcal{C})=\mathrm{Tri}(\mathcal{C})\times_{\mathrm{Fun}([1],\mathcal{C})}\mathrm{Sq}(\mathcal{C})\times_{\mathrm{Fun}([1],\mathcal{C})}\mathrm{Tri}(\mathcal{C})$ be the category of lower nine-diagrams, whose objects are depicted as follows
\begin{equation}
	\begin{tikzcd}
		&&X_{0,2}\arrow[d]\\
		&X_{1,1}\arrow[r]\arrow[d]&X_{1,2}\arrow[d]\\
		X_{2,0}\arrow[r]&X_{2,1}\arrow[r]&X_{2,2}.
	\end{tikzcd}
\end{equation}

Let $\mathrm{ExLowNine}(\mathcal{C})=\mathrm{ExTri}(\mathcal{C})\times_{\mathrm{Fun}([1],\mathcal{C})}\mathrm{Sq}(\mathcal{C})\times_{\mathrm{Fun}([1],\mathcal{C})}\mathrm{ExTri}(\mathcal{C})\subset\mathrm{LowNine}(\mathcal{C})$ be the full subcategory of exact lower nine-diagrams.

By \cite[4.3.2.15]{HTT}, the natural restriction functor
\begin{equation}
	\mathrm{Res}_{\mathrm{Low}}:\mathrm{Nine}(\mathcal{C})\to\mathrm{LowNine}(\mathcal{C})
\end{equation}
restricts to an equivalence
\begin{equation}\label{eq: extend half nine-diagram}
	\mathrm{ExNine}(\mathcal{C})=\mathrm{ExLowNine}(\mathcal{C}).
\end{equation}
	
Similar result is true for the category of (exact) upper nine-diagrams $\mathrm{(Ex)UpNine}(\mathcal{C})$.

\begin{proposition}\label{prop: nine-diagram to sequence}
	There is a natural functor
	\begin{equation}
		\mathrm{ExNine}(\mathcal{C})\to\mathrm{ExTri}(\mathcal{C}),
	\end{equation}
	sending an exact nine-diagram \eqref{eq: nine-diagram} to
	\begin{equation}\label{eq: key triangle}
		\mathrm{cofib}(d_0)\to C_2\to\mathrm{fib}(d_3),
	\end{equation}
	where the chain $C_0\xrightarrow{d_0}C_1\xrightarrow{d_1}C_2\xrightarrow{d_2}C_3\xrightarrow{d_3}C_4$ is given by
	\begin{equation}\label{eq: sequence def}
		\begin{aligned}
			&C_i=\oplus_{j+k=i}X_{j,k},\\
			&d_i|_{X_{j,k}}=d^h_{j,k}+(-1)^{j+k}d^v_{j,k}.
		\end{aligned}
	\end{equation}
\end{proposition}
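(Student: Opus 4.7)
The plan is to identify the three terms of~\eqref{eq: key triangle} as explicit (co)limits using the folding lemma~\ref{lem: FD functor}, and then verify the exact triangle structure by iterated (co)fiber computations involving the octahedral axiom. For the endpoints: applying the functor $\mathrm{FD}$ to the pushout square with apex $X_{0,0}$, arms $X_{0,1}, X_{1,0}$, and pushout $P := X_{0,1}\sqcup_{X_{0,0}} X_{1,0}$ produces the exact triangle $X_{0,0} \to X_{0,1}\oplus X_{1,0} \to P$. Comparing with~\eqref{eq: sequence def} identifies the first map as $d_0$, so $\mathrm{cofib}(d_0) = P$. Dually, $\mathrm{FD}$ applied to the pullback square formed by $Q := X_{1,2}\times_{X_{2,2}} X_{2,1}$, $X_{1,2}$, $X_{2,1}$, $X_{2,2}$ identifies $\mathrm{fib}(d_3) = Q$.

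Next, I verify componentwise that $d_1 d_0 \simeq 0$ and $d_3 d_2 \simeq 0$: the sign pattern $(-1)^{j+k}$ in~\eqref{eq: sequence def} is chosen exactly so that the cross-terms arising from the commutativity of the relevant $2\times 2$ sub-squares cancel, while the aligned terms vanish by the exactness of the relevant rows and columns. This yields factorizations $\bar{d_1}\colon P \to C_2$ and $\bar{d_2}\colon C_2 \to Q$, giving the candidate sequence $P \xrightarrow{\bar{d_1}} C_2 \xrightarrow{\bar{d_2}} Q$.

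The main obstacle is verifying that this candidate is an exact triangle. My plan is to apply the dual octahedral axiom to the factorization $C_2 \xrightarrow{\bar{d_2}} Q \to C_3$ of $d_2$, together with the identification $\mathrm{fib}(Q \to C_3) = X_{2,2}[-1]$ coming from the folded triangle for $Q$, to get an exact triangle $\mathrm{fib}(\bar{d_2}) \to \mathrm{fib}(d_2) \to X_{2,2}[-1]$. I then compute $\mathrm{fib}(d_2)$ directly, splitting $C_3 = X_{1,2}\oplus X_{2,1}$ and iterating fibers along the bottom-right $2\times 2$ square while invoking the exactness of row~$2$ and column~$2$, to realize $\mathrm{fib}(d_2)$ as an extension of $X_{2,2}[-1]$ by $P$; matching the common connecting map to $X_{2,2}[-1]$ in the two exact triangles then identifies $\mathrm{fib}(\bar{d_2}) = P$. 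Functoriality of the whole construction is automatic since every operation used — (co)fibers, pushouts, pullbacks, $\mathrm{FD}$, and the octahedral axiom — is a (co)limit in an appropriate functor category and therefore natural in the input exact nine-diagram. The essential reason to work $\infty$-categorically rather than in a plain triangulated category is the coherence of these factorizations and octahedral triangles; the failure of such coherence in the triangulated setting is precisely what allows Ferrand's counterexample~\cite{Fer05} to exist.
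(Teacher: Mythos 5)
Your identification of the endpoints $\mathrm{cofib}(d_0)=P$ and $\mathrm{fib}(d_3)=Q$ via \ref{lem: FD functor} matches the paper, but from there the two arguments diverge, and yours has a gap that is precisely the $\infty$-categorical subtlety the paper's argument is built to avoid. You construct $\bar d_1\colon P\to C_2$ and $\bar d_2\colon C_2\to Q$ by checking ``componentwise'' that $d_1d_0\simeq 0$ and $d_3d_2\simeq 0$. In a stable $\infty$-category a vanishing composite does not by itself produce a factorization through a cofiber (resp.\ fiber): you need a \emph{specified} nullhomotopy, different nullhomotopies give genuinely different factorizations, and the assertion that the cross-terms ``cancel'' is itself a homotopy that must be extracted coherently from the data of the nine-diagram. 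The same issue recurs at the end: knowing that $\mathrm{fib}(\bar d_2)$ and $P$ both sit in exact triangles over $X_{2,2}[-1]$ with ``the same'' connecting map does not identify them until you produce an actual map of triangles realizing that identification, and you would further need to check that the resulting equivalence $P\simeq\mathrm{fib}(\bar d_2)$ carries $\bar d_1$ to the fiber inclusion. Finally, the choices of nullhomotopies and octahedral fillings are not (co)limit operations, so ``functoriality is automatic'' does not follow from what you wrote; this is exactly the coherence problem you correctly flag via Ferrand's example \cite{Fer05}, but your construction as stated does not resolve it.

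The paper's proof never writes down $\bar d_1,\bar d_2$ directly. It only manipulates exact squares by pasting: from the pushout square \eqref{eq: cofib d_0} and exactness of the rows it produces \eqref{eq: cofib d_0 induces}, whose lower-right exact square has corners $\mathrm{cofib}(d_0),X_{0,2},X_{1,1},X_{1,2}$; dually, the pullback square \eqref{eq: fib d_3} yields the exact cube \eqref{eq: fib d_3 induces}, whose back face is an exact square with corners $X_{0,2}\oplus X_{2,0},X_{0,2},\mathrm{fib}(d_3),X_{1,2}$; the pasting law for exact squares then produces a single exact square with corners $\mathrm{cofib}(d_0)$, $X_{0,2}\oplus X_{2,0}$, $X_{1,1}$, $\mathrm{fib}(d_3)$, and one further application of $\mathrm{FD}$ gives \eqref{eq: key triangle}. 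Every step is an equivalence of functor categories of the form \cite[4.3.2.15]{HTT} (restriction along a subposet inclusion, inverse given by Kan extension), so the maps, their nullhomotopies, and the naturality in $\mathrm{ExNine}(\mathcal{C})$ all come for free. To salvage your route you should replace each ``check the composite is zero and factor'' step by the corresponding universal property (a map into a pullback or out of a pushout), at which point you are essentially reconstructing the paper's argument.
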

\begin{remark}
	See \ref{rmk: sign convention} for remarks on sign conventions. The signs of $d_i$ we chosen here can be depicted as follows
	\begin{equation}
		\begin{tikzcd}
			X_{0,0}\arrow[r,"+"]\arrow[d,"+"]&X_{0,1}\arrow[r,"+"]\arrow[d,"-"]&X_{0,2}\arrow[d,"+"]\\
			X_{1,0}\arrow[r,"+"]\arrow[d,"-"]&X_{1,1}\arrow[r,"+"]\arrow[d,"+"]&X_{1,2}\arrow[d,"-"]\\
			X_{2,0}\arrow[r,"+"]&X_{2,1}\arrow[r,"+"]&X_{2,2}.
		\end{tikzcd}
	\end{equation}
\end{remark}
\begin{proof}
	By \ref{lem: FD functor} we have an exact square
	\begin{equation}\label{eq: cofib d_0}
		\begin{tikzcd}
			X_{0,0}\arrow[rd,phantom,"\square"]\arrow[r]\arrow[d]&X_{0,1}\arrow[d]\\
			X_{1,0}\arrow[r]&\mathrm{cofib}(d_0),
		\end{tikzcd}
	\end{equation}
	which induces the following exact squares
	\begin{equation}\label{eq: cofib d_0 induces}
		\begin{tikzcd}
			X_{0,0}\arrow[dr,phantom,"\square"]\arrow[d]\arrow[r]&X_{0,1}\arrow[d]\\
			X_{1,0}\arrow[r]\arrow[d]\arrow[dr,phantom,"\square"]&\mathrm{cofib}(d_0)\arrow[dr,phantom,"\square"]\arrow[r]\arrow[d]&X_{1,1}\arrow[d]\\
			0\arrow[r]&X_{0,2}\arrow[r]&X_{1,2}.
		\end{tikzcd}
	\end{equation}
	
	By \ref{lem: FD functor} (in fact a variant with different sign convention) we have an exact square
	\begin{equation}\label{eq: fib d_3}
		\begin{tikzcd}
			\mathrm{fib}(d_3)\arrow[r]\arrow[d]\arrow[rd,phantom,"\square"]&X_{1,2}\arrow[d]\\
			X_{2,1}\arrow[r]&X_{2,2},
		\end{tikzcd}
	\end{equation}
	which induces the following exact cube (i.e. every face is exact)
	\begin{equation}\label{eq: fib d_3 induces}
		\begin{tikzcd}[row sep=10,column sep=10]
			X_{0,2}\oplus X_{2,0}\arrow[rr]\arrow[dd]\arrow[dr]&&X_{0,2}\arrow[dd]\arrow[dr]\\
			&X_{2,0}\arrow[rr,crossing over]&&0\arrow[dd]\\
			\mathrm{fib}(d_3)\arrow[dr]\arrow[rr]&&X_{1,2}\arrow[dr]\\
			&X_{2,1}\arrow[uu,leftarrow,crossing over]\arrow[rr]&&X_{2,2}.
		\end{tikzcd}
	\end{equation}
	
	Combine the lower right square of \eqref{eq: cofib d_0 induces} and the back face of \eqref{eq: fib d_3 induces}, we obtain
	\begin{equation}
		\begin{tikzcd}
			\mathrm{cofib}(d_0)\arrow[r]\arrow[d]\arrow[dr,phantom,"\square"]&X_{0,2}\oplus X_{2,0}\arrow[r]\arrow[d]\arrow[rd,phantom,"\square"]&X_{0,2}\arrow[d]\\
			X_{1,1}\arrow[r]&\mathrm{fib}(d_3)\arrow[r]&X_{1,2}.
		\end{tikzcd}
	\end{equation}
	
	We obtain \eqref{eq: key triangle} by applying \ref{lem: FD functor} again. The naturality is obvious (c.f. \cite[4.3.2.15]{HTT}).
\end{proof}

\begin{example}\label{ex: source nine-diagram}
	For $X\in\mathcal{C}$, let $\mathrm{S}(X)$ be the exact nine-diagram
	\begin{equation}
		\begin{tikzcd}
			X[-1]\arrow[r]\arrow[d]&0\arrow[r]\arrow[d]&X\arrow[d,"\mathrm{id}"]\\
			0\arrow[r]\arrow[d]&X\arrow[r,"\mathrm{id}"]\arrow[d,"\mathrm{id}"]&X\arrow[d]\\
			X\arrow[r,"\mathrm{id}"]&X\arrow[r]&0.
		\end{tikzcd}
	\end{equation}
	Its associated exact triangle \eqref{eq: key triangle} is
	\begin{equation}
		X\xrightarrow{(\mathrm{id},-\mathrm{id},\mathrm{id})}X\oplus X\oplus X\xrightarrow{\binom{\mathrm{id},\mathrm{id},0}{0,\mathrm{id},\mathrm{id}}}X\oplus X.
	\end{equation}
\end{example}

\begin{example}\label{ex: target nine-diagram}
	For $X\in\mathcal{C}$, let $\mathrm{T}(X)$ be the exact nine-diagram
	\begin{equation}
		\begin{tikzcd}
			0\arrow[r]\arrow[d]&X\arrow[r,"\mathrm{id}"]\arrow[d,"\mathrm{id}"]&X\arrow[d]\\
			X\arrow[r,"\mathrm{id}"]\arrow[d,"\mathrm{id}"]&X\arrow[r]\arrow[d]&0\arrow[d]\\
			X\arrow[r]&0\arrow[r]&X[1].
		\end{tikzcd}
	\end{equation}
	Its associated exact triangle \eqref{eq: key triangle} is
	\begin{equation}
		X\oplus X\xrightarrow{\binom{\mathrm{id},-\mathrm{id},0}{0,\mathrm{id},-\mathrm{id}}}X\oplus X\oplus X\xrightarrow{(\mathrm{id},\mathrm{id},\mathrm{id})}X.
	\end{equation}
\end{example}

\section{Proof of the additivity theorem}\label{sec: Additivity}
In this section, we use the theory of exact nine-diagrams (especially proposition \ref{prop: nine-diagram to sequence}) to prove the additivity of non-acyclicity classes.

\subsection{}
Let $h:X\xrightarrow{f}Y\xrightarrow{g}S$ in $\mathrm{Sch}_S$, $\mathcal{F},\mathcal{G}\in\mathcal{D}(X)$. Assume $g$ satisfies assumption \ref{assum: smooth assumption}.

For $\alpha:\mathcal{F}\to\mathcal{G}$, we have a coevaluation map $\mathrm{coev}_{\alpha,X/Y}:X\to\mathcal{H}om_Y(\mathcal{F},\mathcal{G})$ as follows
\begin{equation}
	\begin{tikzcd}
		X\arrow[r,"\mathrm{coev}_{\mathcal{F},X/Y}"]\arrow[d,"\mathrm{coev}_{\mathcal{G},X/Y}"]&\mathcal{H}om_Y(\mathcal{F},\mathcal{F})\arrow[d,"{\alpha}"]\\
		\mathcal{H}om_Y(\mathcal{G},\mathcal{G})\arrow[r,"{\alpha}"]&\mathcal{H}om_Y(\mathcal{F},\mathcal{G}).
	\end{tikzcd}
\end{equation}

For $\beta:\mathcal{G}\to\mathcal{F}$, we have an evaluation map $\mathrm{ev}_{\beta,X/Y}:\mathcal{G}\otimes_Y\mathcal{H}om_Y(\mathcal{F},Y)\to D(X/Y)$ as follows
\begin{equation}
	\begin{tikzcd}
		\mathcal{G}\otimes_Y\mathcal{H}om_Y(\mathcal{F},Y)\arrow[r,"\beta"]\arrow[d,"\beta"]&\mathcal{F}\otimes_Y\mathcal{H}om_Y(\mathcal{F},Y)\arrow[d,"\mathrm{ev}_{\mathcal{F},X/Y}"]\\
		\mathcal{G}\otimes_Y\mathcal{H}om_Y(\mathcal{G},Y)\arrow[r,"\mathrm{ev}_{\mathcal{G},X/Y}"]&D(X/Y).
	\end{tikzcd}
\end{equation}

Recall that if $\mathcal{F}$ is dualizable in $\mathrm{CohCorr}_Y$, the Verdier pairing $\langle\alpha,\beta\rangle_{X/Y}\in H^0(X,\mathcal{K}_{X/S})$ is given by (c.f. \cite{LZ22})
\begin{equation}
	X\xrightarrow{\mathrm{coev}_{\alpha,X/Y}}\mathcal{H}om_Y(\mathcal{F},\mathcal{G})=\mathcal{G}\otimes_Y\mathcal{H}om_Y(\mathcal{F},Y)\xrightarrow{\mathrm{ev}_{\beta,X/Y}}X.
\end{equation}

\begin{definition}
	For $\mathcal{F}$ dualizable in $\mathrm{CohCorr}_S$, define a pairing
	\begin{equation}
		\langle\alpha,\beta\rangle_{X/Y/S}\in H^0(X,\mathcal{K}_{X/Y/S})
	\end{equation}
	as the following composition
	\begin{equation}
		\begin{aligned}
			&\quad X\xrightarrow{\mathrm{coev}_{\alpha,X/Y}}\mathcal{H}om_Y(\mathcal{F},\mathcal{G})=\mathcal{H}om_{Y\times_SY}(Y,\mathcal{H}om_S(\mathcal{F},\mathcal{G}))\\
			&=\mathcal{H}om_{Y\times_SY}(Y,\mathcal{G}\otimes_S\mathcal{H}om_S(\mathcal{F},S))\xrightarrow{\ref{def: triangle functor}}\Delta_{Y/S}(\mathcal{G}\otimes_SD(\mathcal{F}/S))\\
			&\xrightarrow{\mathrm{ev}_{\beta,X/S}}\Delta_{Y/S}D(X/S)=D(X/Y/S).
		\end{aligned}
	\end{equation}
	
	In particular $\langle\mathrm{id}_{\mathcal{F}},\mathrm{id}_{\mathcal{F}}\rangle_{X/Y/S}=C_{X/Y/S}(\mathcal{F})$ (c.f. \ref{def: non-localized NA class}).
\end{definition}

\begin{proposition}\label{prop: additivity of non-localized NA class}
	Let $E_{\mathcal{F}}=(\mathcal{F}'\to\mathcal{F}\to\mathcal{F}''),E_{\mathcal{G}}=(\mathcal{G}'\to\mathcal{G}\to\mathcal{G}'')$ be exact triangles in $\mathcal{D}(X)$. Assume that $\mathcal{F}',\mathcal{F},\mathcal{F}''$ are dualizable in $\mathrm{CohCorr}_S$.
	
	Let $(\alpha',\alpha,\alpha''):E_{\mathcal{F}}\to E_{\mathcal{G}}$ be a map of exact triangles depicted as follows
	\begin{equation}\label{eq: alpha F to G}
		\begin{tikzcd}
			\mathcal{F}'\arrow[r,"\alpha'"]\arrow[d]&\mathcal{G}'\arrow[d]\\
			\mathcal{F}\arrow[r,"\alpha"]\arrow[d]&\mathcal{G}\arrow[d]\\
			\mathcal{F}''\arrow[r,"\alpha''"]&\mathcal{G}'',
		\end{tikzcd}
	\end{equation}
	and $(\beta',\beta,\beta''):E_{\mathcal{G}}\to E_{\mathcal{F}}$ a map of exact triangles depicted as follows
	\begin{equation}\label{eq: beta G to F}
		\begin{tikzcd}
			\mathcal{G}'\arrow[r,"\beta'"]\arrow[d]&\mathcal{F}'\arrow[d]\\
			\mathcal{G}\arrow[r,"\beta"]\arrow[d]&\mathcal{F}\arrow[d]\\
			\mathcal{G}''\arrow[r,"\beta''"]&\mathcal{F}''.
		\end{tikzcd}
	\end{equation}
	
	Then we have an equality in $H^0(X,\mathcal{K}_{X/Y/S})$
	\begin{equation}
		\langle\alpha,\beta\rangle=\langle\alpha',\beta'\rangle+\langle\alpha'',\beta''\rangle.
	\end{equation}
	
	In particular, for $\mathcal{F}\in\mathrm{CohCorr}_S$ dualizable, $C_{X/Y/S}(\mathcal{F})$ is additive in $\mathcal{F}$.
\end{proposition}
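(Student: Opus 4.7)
Following the approach used to prove Proposition~\ref{prop: Additivity of trace in stable category}, the plan is to package the data $(E_{\mathcal{F}}, E_{\mathcal{G}}, \alpha, \beta)$ into a chain of morphisms of exact nine-diagrams in $\mathcal{D}(X)$, and then apply the functor $\Phi\colon \mathrm{ExNine}(\mathcal{D}(X)) \to \mathrm{ExTri}(\mathcal{D}(X))$ of Proposition~\ref{prop: nine-diagram to sequence} to obtain a commutative diagram of exact triangles from which the additivity is immediate.

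First I construct the exact nine-diagram $N$ whose $(j,k)$-entry is
\begin{equation*}
\mathcal{H}om_Y(\mathcal{F}_{2-k}, \mathcal{G}_j) \cong \mathcal{H}om_{Y\times_S Y}(Y, \mathcal{G}_j \otimes_S D(\mathcal{F}_{2-k}/S)),
\end{equation*}
using \eqref{eq: compatible dualizability} together with the dualizability hypothesis on $\mathcal{F}', \mathcal{F}, \mathcal{F}''$. Rows are exact because $D(E_{\mathcal{F}}/S)$ is an exact triangle and columns are exact because $E_{\mathcal{G}}$ is; moreover, the three entries at positions $(j,k)$ with $j+k = 2$ are precisely $\mathcal{H}om_Y(\mathcal{F}', \mathcal{G}')$, $\mathcal{H}om_Y(\mathcal{F}, \mathcal{G})$, $\mathcal{H}om_Y(\mathcal{F}'', \mathcal{G}'')$. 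Applying $\Delta_{Y/S}$ termwise produces another exact nine-diagram $M$. I then assemble a chain of morphisms of exact nine-diagrams
\begin{equation*}
S(X) \xrightarrow{\mathrm{coev}} N \to M \xrightarrow{\mathrm{ev}} T(D(X/Y/S)),
\end{equation*}
engineered so that at the three $(j,k)$ positions with $j+k = 2$ the coevaluation restricts to $\mathrm{coev}_{\alpha_i, X/Y}$ and the evaluation to $\mathrm{ev}_{\beta_i, X/S}$, thereby carrying the recipe for $\langle \alpha_i, \beta_i \rangle_{X/Y/S}$ at each.

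Applying $\Phi$ to this chain and using Examples~\ref{ex: source nine-diagram} and~\ref{ex: target nine-diagram} to identify the source and target exact triangles, I obtain a commutative diagram
\begin{equation*}
\begin{tikzcd}[column sep=small]
X \arrow[r, "{(\mathrm{id},-\mathrm{id},\mathrm{id})}"] \arrow[d] & X^{\oplus 3} \arrow[r] \arrow[d, "{\mathrm{diag}(\langle\alpha',\beta'\rangle,\, \langle\alpha,\beta\rangle,\, \langle\alpha'',\beta''\rangle)}"] & X^{\oplus 2} \arrow[d] \\
D(X/Y/S)^{\oplus 2} \arrow[r] & D(X/Y/S)^{\oplus 3} \arrow[r, "{(\mathrm{id},\mathrm{id},\mathrm{id})}"] & D(X/Y/S).
\end{tikzcd}
\end{equation*}
The middle vertical map is diagonal because $\Phi$ records the central term as $C_2 = \bigoplus_{j+k=2} X_{j,k}$ by \eqref{eq: sequence def}, and by construction these three summands carry the traces. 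The composition $X \to D(X/Y/S)$ through the middle column then computes to $\langle\alpha',\beta'\rangle - \langle\alpha,\beta\rangle + \langle\alpha'',\beta''\rangle$; on the other hand, by commutativity of the diagram this composition factors as $X \to D(X/Y/S)^{\oplus 2} \to D(X/Y/S)^{\oplus 3} \to D(X/Y/S)$, whose last two arrows compose to zero as consecutive arrows in an exact triangle. This yields the asserted equality in $\pi_0 \mathrm{Hom}$.

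The main technical difficulty lies in constructing the chain of nine-diagram morphisms coherently in the $\infty$-categorical sense, since all nine entries must carry compatible maps. Rather than specifying each entry by hand, I will promote $\mathrm{coev}_{\bullet, X/Y}$, $\mathrm{ev}_{\bullet, X/S}$, and $\Delta_{Y/S}$ to natural transformations between suitable functors out of $\mathrm{ExTri}(\mathcal{D}(X))^{\mathrm{op}} \times \mathrm{ExTri}(\mathcal{D}(X))$ valued in exact nine-diagrams, so that evaluating at the morphism $(\alpha, \beta)$ of exact triangles produces the required chain automatically. Tracking the sign conventions of Remark~\ref{rmk: sign convention} through $\Phi$ and Examples~\ref{ex: source nine-diagram}--\ref{ex: target nine-diagram} is also required but routine.
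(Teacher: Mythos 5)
Your proposal is correct and follows essentially the same route as the paper: both package $(\alpha,\beta)$ into a chain of maps of exact nine-diagrams $\mathrm{S}(X)\to\mathcal{H}om_Y(E_{\mathcal{F}},E_{\mathcal{G}})\to\Delta_{Y/S}(E_{\mathcal{G}}\otimes_S\mathcal{H}om_S(E_{\mathcal{F}},S))\to\mathrm{T}(D(X/Y/S))$ and then apply \ref{prop: nine-diagram to sequence} together with \ref{ex: source nine-diagram} and \ref{ex: target nine-diagram} to read off $\langle\alpha',\beta'\rangle-\langle\alpha,\beta\rangle+\langle\alpha'',\beta''\rangle=0$. The coherence issue you defer at the end is resolved in the paper by defining the coevaluation and evaluation on lower (resp.\ upper) nine-diagrams via adjunction and invoking the equivalence \eqref{eq: extend half nine-diagram}, which is a concrete implementation of the naturality you propose.
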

\begin{proof}
	By adjunction, \eqref{eq: alpha F to G} induces a map of exact lower nine-diagrams
	\begin{equation}
		\begin{tikzcd}[row sep=5,column sep=5]
			&&&&Y\arrow[dr]\arrow[dd]\\
			&&&&&\mathcal{H}om_Y(\mathcal{F}'',\mathcal{G}'')\arrow[dd]\\
			&&Y\arrow[rr]\arrow[dd]\arrow[dr]&&Y\arrow[dr]\arrow[dd]\\
			&&&\mathcal{H}om_Y(\mathcal{F},\mathcal{G})\arrow[rr]\arrow[dd]&&\mathcal{H}om_Y(\mathcal{F},\mathcal{G}'')\arrow[dd]\\
			Y\arrow[rr]\arrow[dr]&&Y\arrow[rr]\arrow[dr]&&0\arrow[dr]\\
			&\mathcal{H}om_Y(\mathcal{F}',\mathcal{G}')\arrow[rr]&&\mathcal{H}om_Y(\mathcal{F}',\mathcal{G})\arrow[rr]&&\mathcal{H}om_Y(\mathcal{F}',\mathcal{G}''),
		\end{tikzcd}
	\end{equation}
	which extends (uniquely) to a map of exact nine-diagrams by \ref{eq: extend half nine-diagram}
	\begin{equation}
		\mathrm{coev}_{E_\alpha}:\mathrm{S}(Y)\to\mathcal{H}om_Y(E_{\mathcal{F}},E_{\mathcal{G}}).
	\end{equation}
	
	The map $\mathrm{coev}_{E_{\mathcal{F}}}$ lies over (the constant functor of value) the correspondence $[(\delta_f)_!f^*]$, and hence factors through the cocartesian edge over $[f^*]$:
	\begin{equation}
		\begin{tikzcd}
			\mathrm{S}(Y)\arrow[dr,"\mathrm{coev}_{E_\alpha}"swap]\arrow[r,two heads,"{[f^*]}"]&\mathrm{S}(X)\arrow[d,dashed,"\mathrm{coev}_{E_\alpha,X/Y}"]\\
			&\mathcal{H}om_Y(E_{\mathcal{F}},E_{\mathcal{F}}).
		\end{tikzcd}
	\end{equation}
	
	Similarly, \eqref{eq: beta G to F} induces a diagram of exact nine-diagrams
	\begin{equation}
		\begin{tikzcd}
			E_{\mathcal{G}}\otimes_S\mathcal{H}om_S(E_{\mathcal{F}},S)\arrow[r,dashed,"\mathrm{ev}_{E_\beta,X/S}"]\arrow[dr,"\mathrm{ev}_{E_\beta}"swap]&\mathrm{T}(D(X/S))\arrow[d,hook,"{[h_!]}"]\\
			&\mathrm{T}(S).
		\end{tikzcd}
	\end{equation}
	
	We now have the following maps of exact nine-diagrams
	\begin{equation}
		\begin{aligned}
			&\mathrm{S}(X)\xrightarrow{\mathrm{coev}_{E_\alpha,X/Y}}\mathcal{H}om_Y(E_{\mathcal{F}},E_{\mathcal{G}})=\mathcal{H}om_{Y\times_SY}(Y,E_{\mathcal{G}}\otimes_S\mathcal{H}om_S(E_{\mathcal{F}},S))\\
			\xrightarrow{\ref{def: triangle functor}}&\Delta_{Y/S}(E_{\mathcal{G}}\otimes_S\mathcal{H}om_S(E_{\mathcal{F}},S))\xrightarrow{\mathrm{ev}_{E_\beta,X/S}}\mathrm{T}(\Delta_{Y/S}D(X/S))=\mathrm{T}(D(X/Y/S)).
		\end{aligned}
	\end{equation}
	
	Passing to the associated exact triangles \eqref{eq: key triangle}, we conclude that
	\begin{equation}
		\langle\alpha',\beta'\rangle-\langle\alpha,\beta\rangle+\langle\alpha'',\beta''\rangle=0.
	\end{equation}
\end{proof}

\begin{corollary}\label{cor: additivity of NA class}
	Let $E_{\mathcal{F}}=(\mathcal{F}'\to\mathcal{F}\to\mathcal{F}'')$ be an exact triangle in $\mathcal{D}(X)$. Let $U\hookrightarrow X$ open immersion with closed complement $Z\hookrightarrow X$. Assume that $\mathcal{F}',\mathcal{F},\mathcal{F}''$ are dualizable in $\mathrm{CohCorr}_S$, and $\mathcal{F}'|_U,\mathcal{F}|_U,\mathcal{F}''|_U$ are dualizable in $\mathrm{CohCorr}_Y$.
	
	Then we have an equality in $H^0_Z(X,\mathcal{K}_{X/Y/S})$
	\begin{equation}
		C_{X/Y/S}^Z(\mathcal{F})=C_{X/Y/S}^Z(\mathcal{F}')+C_{X/Y/S}^Z(\mathcal{F}'').
	\end{equation}
\end{corollary}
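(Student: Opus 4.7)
The strategy is to run the argument of Proposition \ref{prop: additivity of non-localized NA class} with $\mathcal{G}=\mathcal{F}$ and $\alpha=\beta=\mathrm{id}_{E_\mathcal{F}}$, but to track how the construction factors through $D_Z(X/Y/S)=i_*i^!D(X/Y/S)$ under the additional hypothesis. The non-localized version already supplies the equality $C_{X/Y/S}(\mathcal{F})=C_{X/Y/S}(\mathcal{F}')+C_{X/Y/S}(\mathcal{F}'')$ in $H^0(X,\mathcal{K}_{X/Y/S})$; the new content is to lift this equality to $H^0_Z(X,\mathcal{K}_{X/Y/S})$.

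First, I would observe that dualizability of $\mathcal{F}|_U,\mathcal{F}'|_U,\mathcal{F}''|_U$ in $\mathrm{CohCorr}_Y$ implies, via \eqref{eq: compatible dualizability} and Definition \ref{def: triangle functor}, that $\Delta_{Y/S}(\mathcal{F}^{(\bullet)}|_U\otimes_SD(\mathcal{F}^{(\bullet)}|_U/S))=0$ for each $\mathcal{F}^{(\bullet)}\in\{\mathcal{F}',\mathcal{F},\mathcal{F}''\}$. By the localization triangle $i_*i^!\to\mathrm{id}\to j_*j^*$ applied term-by-term, this means that each column of the exact nine-diagram $\Delta_{Y/S}(E_\mathcal{F}\otimes_S D(E_\mathcal{F}/S))$ is isomorphic to its image under $i_*i^!$, and hence so is the whole exact nine-diagram.

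Next, I would form the composition of maps of exact nine-diagrams provided by the proof of Proposition \ref{prop: additivity of non-localized NA class}:
\begin{equation*}
\mathrm{S}(X)\xrightarrow{\mathrm{coev}}\mathcal{H}om_Y(E_\mathcal{F},E_\mathcal{F})=\mathcal{H}om_{Y\times_SY}(Y,E_\mathcal{F}\otimes_S D(E_\mathcal{F}/S))\to\Delta_{Y/S}(E_\mathcal{F}\otimes_S D(E_\mathcal{F}/S))\xrightarrow{\mathrm{ev}}\mathrm{T}(D(X/Y/S)).
\end{equation*}
By the previous step the third term is its own $i_*i^!$, so the final evaluation map factors uniquely through the natural map $\mathrm{T}(D_Z(X/Y/S))\to\mathrm{T}(D(X/Y/S))$. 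A column-wise comparison with Definition \ref{def: NA class} identifies the resulting lifted map on each diagonal entry $\mathcal{F}^{(\bullet)}$ with the NA class $C^Z_{X/Y/S}(\mathcal{F}^{(\bullet)})$.

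Finally, applying Proposition \ref{prop: nine-diagram to sequence} to this refined composition, together with the exact triangles computed in Examples \ref{ex: source nine-diagram} and \ref{ex: target nine-diagram}, yields the desired equality in $H^0_Z(X,\mathcal{K}_{X/Y/S})$, exactly as in the proof of Proposition \ref{prop: additivity of non-localized NA class}. The main technical point I expect to require care is the column-by-column identification of the lifted composition with the NA class of Definition \ref{def: NA class}; this reduces to the naturality of excision applied to the exact triangle $E_\mathcal{F}$ in the stable $\infty$-categorical setting, together with the compatibility of $i_*i^!$ with cofibers.
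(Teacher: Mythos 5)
Your proposal is correct and follows essentially the same route as the paper: one forms the composite of maps of exact nine-diagrams from the proof of Proposition \ref{prop: additivity of non-localized NA class} with $\alpha=\beta=\mathrm{id}$, uses the vanishing of $\Delta_{Y/S}(\mathcal{F}^{(\bullet)}|_U\otimes_SD(\mathcal{F}^{(\bullet)}|_U/S))$ to identify $\Delta_{Y/S}(E_{\mathcal{F}}\otimes_S\mathcal{H}om_S(E_{\mathcal{F}},S))$ with its own $i_*i^!$ so that the evaluation factors through $\mathrm{T}(D_Z(X/Y/S))$, and concludes by passing to the associated exact triangles of Proposition \ref{prop: nine-diagram to sequence}. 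The only difference is that you spell out the column-by-column excision and the identification with Definition \ref{def: NA class}, which the paper leaves implicit.
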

\begin{proof}
	Proceed as in the proof of \ref{prop: additivity of non-localized NA class}, we have the following maps of exact nine-diagrams
	\begin{equation}
		\begin{aligned}
			\mathrm{S}(X)&\xrightarrow{\mathrm{coev}_{E_{\mathrm{id}},X/Y}}\mathcal{H}om_Y(E_{\mathcal{F}},E_{\mathcal{F}})\to\Delta_{Y/S}(E_{\mathcal{F}}\otimes_S\mathcal{H}om_S(E_{\mathcal{F}},S))\\
			&=i_*i^!\Delta_{Y/S}(E_{\mathcal{F}}\otimes_S\mathcal{H}om_S(E_{\mathcal{F}},S))\xrightarrow{\mathrm{ev}_{E_{\mathrm{id}},X/S}}\mathrm{T}(D_Z(X/Y/S)).
		\end{aligned}
	\end{equation}
	
	We conclude by passing to the associated exact triangles \eqref{eq: key triangle}.
\end{proof}

\end{document}